\documentclass[11pt]{amsart}

\usepackage{amsmath, amsthm, amsfonts, amssymb, mathrsfs, amscd}
\usepackage{graphicx}
\usepackage{dsfont}
\usepackage{tikz-cd}
\usepackage[mathcal]{euscript}
\usepackage{subcaption}
\usepackage{pgfplots}
\usepackage{csquotes}
\usepackage{subfiles}
\usepackage[english]{babel}
\usepackage{url}
\usepackage[
    backend=biber, 
    style=alphabetic,
    doi=false,      
    url=false,      
    isbn=false,     
    eprint=true    
]{biblatex}
\DeclareFieldFormat[article,inbook,incollection,inproceedings]{title}{#1}
\usepackage[colorlinks=true, 
            linkcolor=blue,    
            citecolor=blue,    
            urlcolor=blue]{hyperref}
\newcommand{\C}{\mathbb{C}}
\newcommand{\R}{\mathbb{R}}

\newcommand{\Q}{\mathbb{Q}}
\newcommand{\LL}{\mathcal{L}}
\renewcommand{\P}{\mathbb{P}}

\renewcommand{\O}{\mathcal{O}}

\newcommand{\G}{\mathbb{G}}

\newcommand{\W}{\mathcal{W}}
\renewcommand{\L}{\mathbb{L}}

\newcommand{\A}{\mathbf{A}}

\newcommand{\U}{\mathcal{U}}

\newcommand{\da}{\dagger}
\newcommand{\CC}{\mathcal{C}}

\newcommand{\emp}{\emptyset}

\newcommand{\Hilb}{\operatorname{Hilb}}

\newcommand{\charf}{\operatorname{char}}

\newcommand{\Sym}{\operatorname{Sym}}

\calclayout

\newtheorem{thm}{Theorem}[section]

\newtheorem{conv}[thm]{Convention}
\newtheorem{prop}[thm]{Proposition}
\newtheorem{lem}[thm]{Lemma}
\newtheorem{conj}[thm]{Conjecture}

\newtheorem{defp}[thm]{Definition and Proposition}

\theoremstyle{definition}
\newtheorem{defn}[thm]{Definition}

\theoremstyle{remark}
\newtheorem{rem}[thm]{Remark}

\pgfplotsset{compat=1.9}
\title[Arithmetic purity for cubic hypersurfaces of large dimension]{Arithmetic purity of strong approximation for cubic hypersurfaces of large dimension}

\author[C. Zhang]{Chen Zhang}
\address{School of Mathematical Sciences, Capital Normal University, 105 Xisanhuanbeilu, 100048 Beijing, China}
\email{1230403014@cnu.edu.cn}

\begin{document}

\begin{abstract}
    In this paper, we establish the arithmetic purity of strong approximation for smooth geometrically integral cubic hypersurfaces $X\subset \mathbb{P}^n$ over number fields $k$, provided that $n\ge 323$. For $k=\Q$, the bound can be lowered to $n\ge 30$.
\end{abstract}
\maketitle

\tableofcontents
\addtocontents{toc}{\protect\setcounter{tocdepth}{1}}
\section{Introduction}

Let $k$ be a number field. It is well-known that weak approximation is a $k$-birational invariant property of smooth geometrically integral varieties, while strong approximation fails to be. However, due to various geometric and cohomological purity results, one can expect that strong approximation satisfies purity. This question was proposed by Wittenberg in \cite[Question 2.11]{Wit15}.

\begin{conj}\label{wit conj}
    Let $X$ be a smooth geometrically integral variety over a number field $k$ that satisfies strong approximation off $S\subset \Omega_k$, and let $Z$ be any codimension-2 closed subvariety of $X$. Then $X\setminus Z$ satisfies strong approximation off $S\subset \Omega_k$.
\end{conj}

We call this property the arithmetic purity of strong approximation.
The first example of arithmetic purity for $\mathbb{A}^n$ off any $S\neq \emptyset$ was observed by Cao and Xu in \cite{CaoXu2018} and Wei in \cite{Wei2021}. Then the arithmetic purity of quasi-split semisimple simply connected algebraic groups was proved by Cao, Liang and Xu in \cite{CaoLiangXu2019} and extended to isotropic groups by Cao and Huang in \cite{CaoHuang2020}. The conjecture about arithmetic purity with the Brauer-Manin obstruction was also proposed and confirmed for the corresponding homogeneous spaces in \cite{CaoLiangXu2019}. In \cite{Wei2021}, Wei also proved the arithmetic purity with the Brauer-Manin obstruction for toric varieties. In a recent work \cite{cao2025effectiveequidistributionarithmeticpurity}, Cao and Huang further confirmed the arithmetic purity with the Brauer-Manin obstruction for affine quadric hypersurfaces off any isotropic place $v$. Since weak approximation is equivalent to strong approximation for smooth proper varieties, the corresponding arithmetic purity for strong approximation holds off $\emptyset$. In \cite{chen2024arithmetic}, Chen refined the arithmetic purity with the Brauer-Manin obstruction for certain complete toric varieties off $\emptyset$. In addition, the author proved that the flag variety $G/P$ satisfies arithmetic purity off $\emptyset$, where $G$ is a reductive group and $P$ is a parabolic subgroup. In particular, all smooth projective quadrics satisfy arithmetic purity off $\emptyset$.

Most of the existing results on the arithmetic purity of strong approximation concern varieties closely related to algebraic groups. Cubic hypersurfaces, on the other hand, form a classical and important class of Fano varieties and generally admit few algebraic group actions. The study of weak approximation property for cubic hypersurfaces is itself quite challenging: weak approximation is known only in sufficiently large dimension; in particular, it was established for cubic hypersurfaces of dimension at least 15 by Skinner \cite{Skinner1997} using the circle method.

In this article, we establish the arithmetic purity of strong approximation (off $\emptyset$) for cubic hypersurfaces $X\subset \P^n$ over a number field $k$ provided that $n$ is sufficiently large. Our main theorem is
\begin{thm}[Theorem \ref{cubic AP}]\label{intro-main-thm}
    Let $X$ be a smooth geometrically integral cubic hypersurface in $\P^{n}$ over a number field $k$. Then $X$ satisfies arithmetic purity of strong approximation provided that $n\ge 323$. If $k=\Q$, $X$ satisfies arithmetic purity of strong approximation provided that $n\ge30$.
\end{thm}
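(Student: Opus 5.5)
Since $X$ is smooth and projective, strong approximation off $\emptyset$ coincides with weak approximation. The statement to prove is therefore the following. Let $Z\subset X$ be closed of codimension at least $2$, let $S\subset\Omega_k$ be finite, and suppose we are given points $x_v\in (X\setminus Z)(k_v)$ for $v\in S$, with integral models $\mathcal X\supset\mathcal Z$ over $\O_{k,S'}$ for a large finite $S'\supset S$. We must find $x\in X(k)$ that is close to each $x_v$ for $v\in S$ and extends to an $\O_v$-point of $\mathcal X\setminus\mathcal Z$ for every $v\notin S$. In other words, the reduction of $x$ modulo $\mathfrak p$ must avoid $\mathcal Z$ for every prime $\mathfrak p\notin S$.

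I would prove this with the circle method, in the form used by Skinner (over number fields) and by the $\Q$-results of Heath-Brown and Browning. Enlarging $Z$, we may assume $Z$ is cut out in $\P^n$ by $F$ together with two further forms $G_1,G_2$ such that $\dim(X\cap\{G_1=G_2=0\})=n-3$.

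Work with lifts $\mathbf x\in\O_k^{n+1}$ that are primitive, satisfy $F(\mathbf x)=0$, lie in a box $P\cdot\mathcal B$, and meet the congruence and real conditions encoding the approximation to the $x_v$. The bad set consists of those $\mathbf x$ for which there is a prime $\mathfrak p\notin S$ with $\mathfrak p\mid G_1(\mathbf x)$ and $\mathfrak p\mid G_2(\mathbf x)$. Let $N(P)$ be the number of good points, and split the bad points according to the size of $N\mathfrak p$.

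\textbf{Small primes}, $N\mathfrak p\le M$ with $M$ a slowly growing parameter. Here I would apply the circle-method asymptotic with congruence conditions modulo $\prod_{N\mathfrak p\le M}\mathfrak p$. It gives a main term $\sigma_\infty\prod_{\mathfrak p}\sigma_{\mathfrak p}P^{(n+1-3)d}$ in which each local density $\sigma_{\mathfrak p}$ is multiplied by $1-O(N\mathfrak p^{-2})$, and the error term is uniform in the modulus as long as $M$ is small compared with a power of $P$. Lang--Weil applied to the codimension-$2$ subscheme $\mathcal Z_{\mathfrak p}$ gives the $O(N\mathfrak p^{-2})$ loss, and since $\sum_{\mathfrak p} N\mathfrak p^{-2}<\infty$ the infinite product converges to a positive constant.

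\textbf{Medium primes}, $M<N\mathfrak p\le P^{\delta}$. Apply the same asymptotic separately for each single prime. The count of points with $\mathfrak p\mid G_1,G_2$ is $O(P^{(n-2)d}N\mathfrak p^{-2})$, and summing over $\mathfrak p>M$ gives $o(P^{(n-2)d})$.

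\textbf{Large primes}, $N\mathfrak p>P^{\delta}$. This is the main obstacle, because the circle method has no uniformity left at such moduli. I would argue as in Ekedahl's geometric sieve, adapted to a hypersurface and following Browning--Heath-Brown's treatment of the quadric case. Fibre over the first $n-1$ coordinates (after a linear change so that the projection is generically finite on $X\cap\{G_1=G_2=0\}$). Then:
\begin{enumerate}
\item A large prime dividing both $G_1(\mathbf x)$ and $G_2(\mathbf x)$ forces the resultant $R(\mathbf x')$ in the remaining variables to vanish modulo $\mathfrak p$.
\item Either $R(\mathbf x')=0$, which happens on a proper subvariety and contributes $O(P^{(n-3)d+\epsilon})$ by a trivial dimension count, or $R(\mathbf x')$ has only $O(1)$ prime divisors of norm $>P^{\delta}$.
\item For each such prime, count the remaining coordinates lying in $O(1)$ residue classes modulo $\mathfrak p$ and on $X$. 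This requires a uniform bound for points on the cubic restricted to lattice cosets of index $N\mathfrak p$, which I would take from a circle-method estimate uniform in the modulus (a Birch/Skinner upper bound with lattice weights).
\end{enumerate}

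This step is where the dimension requirement enters. The Weyl-differencing savings must beat the loss from the modulus, which forces $n$ far beyond Skinner's threshold over general $k$ (giving $n\ge323$), while the sharper $\Q$ estimates of Heath-Brown type give $n\ge30$.

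Combining the three ranges shows that $N(P)\sim cP^{(n-2)d}$ with $c>0$. Hence a good point exists, which proves the arithmetic purity asserted in Theorem~\ref{intro-main-thm}.
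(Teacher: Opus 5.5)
Your approach is a direct circle-method count of points of $X$ whose reductions avoid $\mathcal Z$ at every $\mathfrak p\notin S$. This is a genuinely different route from the paper's, and your small- and medium-prime ranges are plausible. But the large-prime range, which you correctly identify as the crux, contains no working argument, and two of its steps fail as stated.

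First, the locus $R(\mathbf x')=0$ does not contribute $O(P^{(n-3)d+\epsilon})$ ``by a trivial dimension count''. The points of the affine cone over $X$ with $R(\mathbf x')=0$ form a variety of dimension $n-1$, so the trivial bound is $O(P^{(n-1)d})$. That exceeds your main term $P^{(n-2)d}$, because the main term reflects the saving of $P^{3d}$ from the cubic equation, which no dimension count sees. Making this piece $o(P^{(n-2)d})$ needs a power saving for points of $X$ on the divisor $\{R=0\}$. That is essentially a circle-method treatment of the system formed by your cubic form and $R$, where $\deg R$ depends on $Z$ and is unbounded.

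Second, in step (3), once $\mathbf x'$ is fixed the cubic equation only cuts out a plane curve in $(x_{n-1},x_n)$. A fibre carries on average only $P^{-d}$ points of $X$, so no fibre-by-fibre bound survives summation over the $\asymp P^{(n-1)d}$ relevant $\mathbf x'$. Regrouping by $\mathfrak p$ does not give a bounded number of cosets of one lattice either. The admissible classes depend on $\mathbf x'$, and their union is the set of about $N\mathfrak p^{\,n-2}$ residue classes modulo $\mathfrak p$ over the cone on $\mathcal Z_{\mathfrak p}$, with $N\mathfrak p$ as large as $P^{O_Z(1)}$. Once $N\mathfrak p$ exceeds roughly $P^{d}$, each such class meets the box in $O(1)$ points, so Weyl differencing has nothing left to save. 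The loss in the modulus cannot be absorbed by any fixed number of variables. You yourself note that the circle method has no uniformity at these moduli, yet step (3) relies on exactly that. What is missing is a geometric sieve for cubic hypersurfaces; for quadrics this alone is the content of a paper of Browning--Heath-Brown. The thresholds $323$ and $30$ are asserted, not derived from your argument.

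The paper avoids sieving entirely. Its connecting lemma (Lemma~\ref{connecting lemma cubic}) upgrades weak approximation on $U=X\setminus Z$ to strong approximation, provided two general rational points $x',x''$ of $U$ are joined by a $k$-morphism $\P^1\to U$. One approximates at finitely many places using weak approximation on $U$ and on $\P^1$. Integrality at all other places is then automatic, because $\P^1$ is proper and the curve misses $Z$. The connecting curves are conics. If $x$ is the third point of $X$ on the line through $x',x''$, residuation in the plane spanned by that line and a line $L\ni x$ on $X$ identifies an open part of $F_x$ with the non-degenerate conics through $x',x''$. A dimension count then shows that most of these conics avoid $Z$. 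The circle method is used only on $F_x$, a smooth complete intersection of a quadric and a cubic in $\P^{n-2}$, to get Zariski-dense rational points. That is where $n\ge 323$ (Skinner, via Wittenberg) and $n\ge 30$ over $\Q$ (Browning--Dietmann--Heath-Brown) come from.
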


\subsection*{Outline of our approach}
1. We introduce our connecting Lemma~\ref{connecting lemma cubic}, which is a basic tool of our approach. This lemma states that if a smooth variety $U$ satisfies the condition that any two general $k$-rational points $x',x''$ in $U(k)$ are connected by a morphism $f:\P^1\to U$ over $k$, then we can lift the property of weak approximation to strong approximation.

2. To find $k$-rational curves in $X$, we introduce the moduli space $M$ parameterizing conics in $X$. Since planes cut cubic hypersurfaces along a cubic curve, there are some correspondences between the space $M$ and the Fano variety $F$ parameterizing lines in $X$. In particular, there is a ``birational map'' between $F_x$ and $M_{x',x''}$ for three points $x,x',x''\in X$ lying on a common line $L'\subset \P^n$, where $F_x$ and $M_{x',x''}$ are the subspaces of $F$ and $M$ parameterizing those curves passing through the point $x$ and the points $x',x''$, respectively (see Proposition~\ref{birational between F_x and M_x',x''}).

3. Critically, the Fano variety $F_x$ is an intersection of a cubic and a quadric in a projective space, where the circle method applies (see Theorem~\ref{cubic facts}). This transfers abundant $k$-rational points in $F_x$ to $M_{x',x''}$. 

4. For a codimension-2 closed subvariety $Z\subset X$ and $U=X\setminus Z$, we remove those conics in $M_{x',x''}$ meeting $Z$ by some geometric arguments (see Proposition~\ref{Conics avoid Z}). Then, we can connect points $x',x''\in U(k)$ by a non-degenerate conic $C$ defined over $k$ which is isomorphic to $\P^1_k$ since it contains a rational point. 

5. Then the variety $U\subset X$ satisfies weak approximation (as a well-known result of the circle method) and the condition of the connecting lemma, hence satisfies strong approximation.

\subsection*{Acknowledgements}
I am grateful to Professor Fei Xu for suggesting the problem on arithmetic purity of strong approximation and for his guidance and support throughout this project.

\section{Fano Varieties of Lines}

In this section, we investigate the arithmetic and geometric properties of the Fano variety $F=F(X)$ parameterizing lines on a cubic hypersurface $X$. Its closed subvariety $F_x\subset F(X)$, parameterizing lines passing through a point $x\in X$, is the intersection of a cubic and a quadric in a projective space. As a result, we can apply the circle method to $F_x$ in Theorem~\ref{cubic facts}, which establishes an abundance of $k$-rational lines on $X$ passing through a general $k$-rational point $x\in X(k)$ when the dimension of $X$ is sufficiently large.

\begin{conv}
Throughout this paper, unless otherwise specified, let $X$ be a smooth geometrically integral cubic hypersurface in $\P^{n}$ over a number field $k$ with $n\ge5$.
\end{conv}

First, we recall some basic facts about the Fano variety $F(X)$ parameterizing lines in $X$, including geometric properties of the Fano variety $F(X)$, its universal family $\L$, and its closed subvarieties $F_x$ parameterizing those lines passing through $x\in X$. We also recall the classification of lines $L\in F(X)$ into lines of the first type and lines of the second type. The following definitions and results are based on \cite[Chapter II]{huybrechts2023geometry}.

We can define the Fano variety of lines on a cubic hypersurface via the Grassmannian. Let $V$ be an $(n+1)$-dimensional vector space over $k$, and let $X \subset \mathbb{P}^n = \mathbb{P}(V)$ be a cubic hypersurface defined by a non-zero homogeneous cubic polynomial $f \in \mathrm{Sym}^3(V^\vee)$. Let $\G(1, \P(V))\cong \mathrm{Gr}(2,V)$ denote the Grassmannian of lines in $\mathbb{P}(V)$.
Let $\mathcal{S}$ be the tautological bundle over $\mathrm{Gr}(2,V)$, whose fiber at a point $[W]$ is the 2-dimensional vector space $W \subset V$. The cubic form $f$ corresponds to a global section 
\[
s_f \in H^0\Big(\mathrm{Gr}(2, V), \, \mathrm{Sym}^3(\mathcal{S}^\vee)\Big).
\]
The condition that $\P(W)\subset X$ is equivalent to $f|_{W} \equiv 0$, which means $s_f$ vanishes at $[W]$. 
Thus, we have the following definition:
\begin{defn}\label{defn Fano variety}
(i) The Fano variety $F=F(X)$ of lines on the cubic hypersurface $X$ is defined as the zero locus of the section $s_f$ in $\mathrm{Gr}(2,V)$:
\[
F(X) = V(s_f) \subset \mathrm{Gr}(2, V).
\]
(ii) The incidence variety $\L$ of $F(X)$ is defined as the projective line bundle 
\[
\L=\P(\mathcal{S})
\] 
on $F(X)$, where $\mathcal{S}$ is the tautological vector bundle of $F(X)\subset \mathrm{Gr}(2,V)$. Equivalently, we can describe $\L$ as 
\[
\mathbb{L} = \{ (L, x) \in F(X) \times X \mid x \in L \}.
\]

(iii) The closed subvariety $F_x\subset F$ parameterizing those lines passing through the point $x\in X$ is defined as the fiber $q^{-1}(x)$ of the projection $q:\L \to X$. Equivalently, we can describe $F_x$ as
\[
F_x = \{ L\in F(X) \mid x \in L \}.
\]
\end{defn}

\begin{thm}\label{F smooth and dimension}
\cite[II, Proposition 1.19]{huybrechts2023geometry} The Fano variety $F=F(X)$ of lines on a smooth cubic hypersurface $X\subset \P^{n}$ is a smooth projective $(2n-6)$-dimensional variety over $k$. 
\end{thm}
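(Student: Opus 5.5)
We plan to argue via the description of $F=F(X)$ as the zero locus $V(s_f)$ of the section $s_f$ of $\mathrm{Sym}^3(\mathcal{S}^\vee)$ on $\mathrm{Gr}(2,V)$ from Definition~\ref{defn Fano variety}. Projectivity is immediate, since $F$ is a closed subscheme of the projective Grassmannian. The bundle $\mathrm{Sym}^3(\mathcal{S}^\vee)$ has rank $4$ and $\dim \mathrm{Gr}(2,V)=2(n-1)$. Hence every irreducible component of $F$ has dimension at least $2n-2-4=2n-6$, provided $F$ is nonempty. Smoothness and dimension are geometric statements, so we may base change to $\bar k$ and work there.

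\emph{Step 1 (nonemptiness).} Over $\bar k$ the top Chern class $c_4(\mathrm{Sym}^3\mathcal{S}^\vee)$ is a nonzero class on $\mathrm{Gr}(2,V)$ (for $n=3$ it gives the $27$ lines). Hence $F\neq\emptyset$. Alternatively, we can exhibit a line directly through a general point $x$: the lines through $x$ are cut out in $\P(T_x X)\cong\P^{n-2}$ by one quadric and one cubic, and this locus is nonempty since $n-2\ge 2$.

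\emph{Step 2 (tangent space computation).} This is the heart of the argument. Let $L=\P(W)\in F$. The Zariski tangent space of the Grassmannian at $[W]$ is $\mathrm{Hom}(W,V/W)=H^0(L,N_{L/\P^n})$. The tangent space of $F$ at $L$ is the kernel of the differential of $s_f$, which sends $\varphi\in\mathrm{Hom}(W,V/W)$ to the cubic form $w\mapsto Df_w(\varphi(w))$ in $\mathrm{Sym}^3W^\vee$. Equivalently, it is the map
\[
H^0(L,N_{L/\P^n})=H^0(\O_L(1)^{n-1})\to H^0(N_{X/\P^n}|_L)=H^0(\O_L(3)),
\]
induced by the partial derivatives of $f$. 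The kernel is $H^0(L,N_{L/X})$. The tangent space therefore has dimension $2n-6$, which is what smoothness requires, exactly when this map is surjective.

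\emph{Step 3 (surjectivity).} This is the main obstacle, and it is where smoothness of $X$ enters. Choose coordinates with $W=\langle e_0,e_1\rangle$ and write
\[
f=\sum_{i\ge2}x_i\,q_i(x_0,x_1)+(\text{terms of degree}\ge2\text{ in }x_2,\dots,x_n),
\]
where each $q_i$ is a binary quadratic form. The image of the map is $\sum_i q_i\cdot\langle x_0,x_1\rangle\subset\mathrm{Sym}^3W^\vee$. Smoothness of $X$ along $L$ says that the $q_i$ have no common zero on $L$. We then need to show that $\langle q_i\rangle\cdot\langle x_0,x_1\rangle$ is all of the $4$-dimensional space of binary cubics. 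Two cases arise.
\begin{itemize}
\item If the $q_i$ span a space of dimension $\ge2$ without common zero, pick two coprime quadrics $q,q'$. Multiplication $(a,b)\mapsto aq+bq'$ from pairs of linear forms to cubics is injective, because a relation $aq=-bq'$ would force $q\mid b$, which is impossible by degree. The image therefore has dimension $4$.
\item The span cannot be $1$-dimensional, since a single binary quadric has a zero over $\bar k$.
\end{itemize}
So surjectivity holds everywhere, $h^1(N_{L/X})=0$, and $F$ is smooth of dimension exactly $2n-6$ at every point. Finally, smoothness together with the expected dimension implies that $F$ is a local complete intersection cut out regularly by $s_f$, and hence equidimensional of dimension $2n-6$.
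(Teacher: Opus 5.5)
Your proposal is correct, and it is the standard argument behind the result the paper cites; the paper gives no proof of its own, only the reference to Huybrechts. The ingredients are the lower bound $\dim_{[L]}F\ge 2n-6$ from the rank-$4$ section, the identification $T_{[L]}F=\ker\bigl(H^0(\O_L(1)^{n-1})\to H^0(\O_L(3))\bigr)=H^0(L,N_{L/X})$, and surjectivity of that map via a base-point-free pair of the quadrics $q_i=\partial_i f|_L$, which is exactly where smoothness of $X$ along $L$ enters. The one thing you do not address is connectedness, hence irreducibility if one reads that into ``variety'' (it fails for $n=3$ and needs a separate argument in general); however, the paper only ever uses nonemptiness, smoothness and pure dimension $2n-6$, all of which you establish.
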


From \cite[II, Sections 2.1 and 2.2]{huybrechts2023geometry}, we have the following definitions:
\begin{defn}\label{defn first type and second type}
(i) The Fano variety $F_1(X)$ of lines of the first type is defined as 
\[
F_1(X)=\{L\in F(X)\mid h^1(L,\mathcal{N}_{L/X}(-1))=0\},
\]
which is an open subscheme of $F(X)$.

(ii) The Fano variety $F_2(X)$ of lines of the second type is defined as
\[
F_2(X)=\{L\in F(X)\mid h^1(L,\mathcal{N}_{L/X}(-1))\ge 1\},
\]
which is a closed subscheme of $F(X)$.

(iii) The incidence varieties $\L_1$ and $\L_2$ are defined as the pullbacks of $F_1(X)$ and $F_2(X)$ under the projection $p:\L\to F(X)$, respectively. 
\end{defn}

From \cite[II, Propositions 2.12, 2.14, and 2.15]{huybrechts2023geometry}, we have the following lemma:
\begin{lem}\label{second type lines dim}
(i) The Fano variety $F_2(X)$ of lines of the second type is of dimension $n-3$.

(ii) The incidence variety $\L_2$ is the non-smooth locus of $q:\L \to X$, i.e.,
\[
\L_2=\{(L,x)\in \L \mid dq: T_{(L,x)}\L\to T_x X \text{ is not surjective}\}.
\]
\end{lem}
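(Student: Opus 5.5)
Both parts of Lemma~\ref{second type lines dim} are geometric, so we may base change to $\bar k$. The plan is to analyse the normal bundle of a line $L\subset X$ concretely, then compute $dq$ in terms of that normal bundle.

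\textbf{Normal bundle and the Gauss map.} Choose coordinates with $L=\{x_2=\dots=x_n=0\}$ and write
\[
f=\sum_{i\ge2}x_i\,q_i(x_0,x_1)+(\text{terms of degree}\ge2\text{ in }x_2,\dots,x_n),
\]
where the $q_i$ are binary quadrics. Using $\mathcal{N}_{L/\P^n}\cong\O(1)^{n-1}$, the normal bundle sequence identifies $\mathcal{N}_{L/X}$ with the kernel of
\[
(q_2,\dots,q_n)\colon \O(1)^{n-1}\to\O(3),
\]
which is surjective because $X$ is smooth along $L$. This kernel has rank $n-2$ and degree $n-4$, so it splits as a sum of $\O(a_i)$ with each $a_i\le1$. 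The only possibilities are therefore $\O^2\oplus\O(1)^{n-4}$ and $\O(-1)\oplus\O(1)^{n-3}$, and $h^1(\mathcal{N}_{L/X}(-1))\neq0$ happens exactly in the second case. Equivalently, the span of $q_2,\dots,q_n$ inside the $3$-dimensional space of binary quadrics has dimension $2$ rather than $3$; geometrically, there is a $2$-plane $P\supset L$ tangent to $X$ along all of $L$.

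\textbf{Part (ii).} The tangent space is $T_{(L,x)}\L = \{(\phi,v)\}$, where $\phi\in H^0(\mathcal{N}_{L/X})$ and $v$ projects to $\phi(x)$. It follows that $dq$ is surjective iff the evaluation map $H^0(\mathcal{N}_{L/X})\to \mathcal{N}_{L/X}|_x$ is surjective. This map is surjective for every $x$ precisely when all summands $\O(a_i)$ have $a_i\ge0$, i.e.\ for the first type. For the second type it fails at every $x$, since the summand $\O(-1)$ has no sections. This proves (ii) directly.

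\textbf{Part (i).} This is a dimension count, done on an incidence correspondence $I=\{(L,P): L\subset P\cap X,\ P\ \text{a plane tangent to }X\text{ along }L\}$, with the aim of showing $\dim F_2=n-3$.
\begin{itemize}
\item Tangency along $L$ forces $P\cap X=2L+L'$ for some line $L'$.
\item Conversely, the second-type lines are related to the singularity of the dual (Gauss) map, which is finite by smoothness.
\item Following Huybrechts, I would parameterize pairs $(P,\text{point})$ and impose that the plane cubic $P\cap X$ has a double line. Planes containing a line form a $\P^{n-2}$-bundle over $F$.
\item Inside a $\P^{n-2}$ fiber, the condition to contain $2L$ is that $x_2$ times the linear span of the $q_i$ degenerates. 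This is codimension $n-3$ in $F$, since $3\times(n-1)$ matrices of rank $\le2$ have codimension $n-3$. That gives $\dim F_2=(2n-6)-(n-3)=n-3$.
\end{itemize}

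\textbf{Main obstacle.} The hard step is showing this degeneracy locus has exactly the expected codimension, i.e.\ that it is neither empty nor too large. Non-emptiness follows from the Porteous class being nonzero. For the upper bound, I would use finiteness of the Gauss map of a smooth hypersurface to exclude excess families, by arguing that a larger family of second-type lines would force a positive-dimensional fibre of the Gauss map.
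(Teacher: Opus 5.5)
The paper gives no argument for this lemma; it simply quotes Huybrechts (II, Props.~2.12, 2.14, 2.15). Your splitting-type analysis is correct, as is your rank criterion on $(q_2,\dots,q_n)$. Your proof of (ii), via $T_{[L]}F=H^0(L,\mathcal{N}_{L/X})$ and surjectivity of evaluation at $x$, is also correct and is the standard argument.

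Part (i), however, is not proved. Your final bullet only computes the \emph{expected} codimension of the locus where the bundle map $V\otimes\O_F/\mathcal{S}\to\Sym^2\mathcal{S}^\vee$ (rank $n-1$ to rank $3$) drops to rank $\le 2$. For a degeneracy locus this gives only $\operatorname{codim}\le n-3$ on each non-empty component, i.e.\ $\dim F_2\ge n-3$. That is the opposite of the direction the paper actually uses: Definition and Proposition~\ref{bad locus} needs $\dim\L_2\le n-2$, and Proposition~\ref{Lines on an open neighborhood} needs $F_1$ dense.

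For the upper bound you offer only ``a larger family would force a positive-dimensional fibre of the Gauss map'', with no mechanism. Non-emptiness rests on a Porteous class you never compute.

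Your incidence $I$ also does not isolate $F_2$. The plane $\langle L,e\rangle$ with $e=(0:0:c_2:\dots:c_n)$ is tangent to $X$ along $L$ iff $\sum c_iq_i\equiv 0$. These are three linear conditions on $c$, so for $n\ge 5$ every first-type line also has a $\P^{n-5}$ of such planes. The ``tangent $2$-plane'' criterion is special to $n=4$. In general, second type means there is a $\P^{n-2}$ tangent along $L$, equivalently the Gauss map $\mathcal{D}$ sends $L$ two-to-one onto a line.

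That reformulation is what closes the gap. In your coordinates $\mathcal{D}|_L=(0:0:q_2:\dots:q_n)$, so $L\in F_2$ iff $\mathcal{D}|_L$ is a degree-$2$ map onto a line. Moreover $\mathcal{D}$ is finite, because $\mathcal{D}^*\O(1)=\O_X(2)$ is ample. The upper bound then follows in four steps:
\begin{enumerate}
\item For an irreducible component $F_2^0$, put $Y_0=q(p^{-1}(F_2^0))$.
\item By your (ii) and generic smoothness of $q$, $Y_0\ne X$, so $\dim Y_0\le n-2$.
\item At a general $y\in Y_0$ the finite map $\mathcal{D}|_{Y_0}$ is unramified. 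Every $L\in F_2^0$ through $y$ lies in $Y_0$, so $d(\mathcal{D}|_L)_y\neq 0$. Hence $L$ joins $y$ to a point $y'\ne y$ of the finite set $\mathcal{D}^{-1}(\mathcal{D}(y))$.
\item Thus $p^{-1}(F_2^0)\to Y_0$ is generically finite, and $\dim F_2^0\le n-3$.
\end{enumerate}
Non-emptiness, and hence equality, still needs a separate argument, but only the upper bound is used later in the paper.
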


Since the non-smooth locus of $q:\L \to X$ has dimension much lower than the smooth locus, we find that the bad locus $B\subset X$ where the dimension of fibers of $q:\L\to X$ jumps is of dimension at most $1$. 

\begin{defp}\label{bad locus}
We define the bad locus $B\subset X$ where the dimension of fibers of $q:\L\to X$ jumps, i.e.,
\[
B=\{x\in X \mid \dim q^{-1}(x) > n-4 \},
\]
which is a closed subvariety of $X$. The locus $B$ is of dimension at most $1$.
\end{defp}

\begin{proof}
That $B\subset X$ is a closed subvariety is a direct consequence of the upper semicontinuity of fiber dimensions.

We denote by $q_1:\L_1\to X$ and $q_2:\L_2\to X$ the restrictions of the projection $q:\L \to X$.
Recall that $F_x= q^{-1}(x)$ by definition. We decompose $F_x$ into $F_x=F_{1,x}\cup F_{2,x}$ with $F_{1,x}=q_1^{-1}(x)$ and $F_{2,x}=q_2^{-1}(x)$. Recall that $\dim\L=\dim F(X)+1=2n-5$ by Theorem~\ref{F smooth and dimension} and Definition~\ref{defn Fano variety}, and that $\dim\L_2=\dim F_2(X)+1=n-2$ by Lemma~\ref{second type lines dim} and Definition~\ref{defn first type and second type}. As a corollary, $\L_1$ is an open dense subvariety of $\L$ of dimension $2n-5$. 

By Lemma~\ref{second type lines dim}, we know that the morphism $q_1:\L_1\to X$ is smooth, so each fiber $F_{1,x}$ is either empty or of dimension $\dim \L_1-\dim X=n-4$. 

If $x\in B$, i.e., $F_x=F_{1,x}\cup F_{2,x}$ has dimension at least $n-3$, then we have $\dim F_{2,x}\ge n-3$ since $\dim F_{1,x}\le n-4$. Consider the pullback of $q_2$ along $B\subset X$: 
\[
q_{2,B}:q_2^{-1}(B)\to B.
\]
Then each fiber of $q_{2,B}$ is of dimension at least $n-3$; therefore, we have 
\[
\dim \L_2 \ge \dim q_2^{-1}(B) \ge n-3+\dim B.
\]
Since $\dim \L_2=n-2$, we have $\dim B \le 1$. This completes the proof.
\end{proof}

Recall that we define the Fano variety $F(X)$ as the closed subscheme of the Grassmannian $\G(\P(V))\cong \mathrm{Gr}(2,V)$. We can also regard $F_x$ as the closed subvariety of the Schubert subvariety $\Sigma_x \subset \mathrm{Gr}(2, V)$ which parameterizes the lines in $\P^n$ passing through the point $x$. Then a line $L\in \Sigma_x$ lying on $X$ yields three equations in $\Sigma_x\cong \P^{n-1}$. This describes the variety $F_x$ as an intersection of a quadric and a cubic in $\P^{n-2}$. This fact is also noted in \cite[II, Remark 3.8]{huybrechts2023geometry}.

\begin{prop}\label{F_x, dimension, smooth, intersection}
(i) Let $F_x$ be the closed subvariety of $F(X)$ consisting of lines passing through $x\in X$. Then $F_x$ is the intersection of a quadric hypersurface $Q$ and a cubic hypersurface $C$ in $\P^{n-2}$.

(ii) There exists an open dense subvariety $V\subset X$ such that for all $x\in V$, $F_x$ is a smooth complete intersection $C\cap Q$ of a cubic and a quadric in $\P^{n-2}$ with dimension $n-4$.
\end{prop}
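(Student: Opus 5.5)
Coordinates come first. Choose coordinates with $x=[1:0:\dots:0]$ and write
\[
f(x_0,\dots,x_n)=x_0^2 f_1(x_1,\dots,x_n)+x_0 f_2(x_1,\dots,x_n)+f_3(x_1,\dots,x_n),
\]
with $f_i$ homogeneous of degree $i$. There is no $x_0^3$ term because $x\in X$. Lines through $x$ correspond to points $y=[y_1:\dots:y_n]\in\P^{n-1}\cong\Sigma_x$. The line $\{[s:ty]\}$ lies on $X$ iff
\[
f(s,ty)=s^2t f_1(y)+st^2 f_2(y)+t^3 f_3(y)\equiv 0,
\]
that is, iff $f_1(y)=f_2(y)=f_3(y)=0$. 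Since $X$ is smooth at $x$, the linear form $f_1$ is nonzero; indeed $f_1$ is the tangent hyperplane $T_xX$ read in $\P^{n-1}$. So $V(f_1)\cong\P^{n-2}$, and $F_x=Q\cap C$ inside it, with $Q=V(f_2|_{V(f_1)})$ and $C=V(f_3|_{V(f_1)})$. To make this scheme-theoretic, I would check that the section $s_f$ restricted to $\Sigma_x$ has exactly these components. The fiber of $\mathrm{Sym}^3\mathcal{S}^\vee$ at $W=\langle e_0,y\rangle$ has basis $s^3,s^2t,st^2,t^3$. The $s^3$ coefficient vanishes identically, and the other three coefficients are $f_1,f_2,f_3$. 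This proves (i). If $f_2$ or $f_3$ vanishes identically on the hyperplane, I would still read $Q$ or $C$ as the (degenerate) zero locus, so the statement stays true.

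For (ii), first take $V_0=X\setminus B$, using Definition and Proposition~\ref{bad locus}. This is open dense because $\dim B\le1<n-1$. For $x\in V_0$, $\dim F_x=n-4=(n-2)-2$. So $Q\cap C$ has the expected codimension $2$ in $\P^{n-2}$, and it is therefore a complete intersection. In particular $f_2,f_3$ do not vanish identically on the hyperplane.

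Smoothness comes next. By Lemma~\ref{second type lines dim}(ii), $q$ is smooth at $(L,x)$ exactly when $L$ is of the first type. Hence $F_x=q^{-1}(x)$ is smooth, of dimension $n-4$, along $F_{1,x}$. It remains to avoid points $x$ lying on second-type lines, i.e.\ to remove $q(\L_2)$. Since $\dim\L_2=n-2<n-1=\dim X$, the closure $\overline{q(\L_2)}$ is a proper closed subset of $X$. Set
\[
V=X\setminus\bigl(B\cup q(\L_2)\bigr),
\]
where $q(\L_2)$ is closed because $q$ is proper. For $x\in V$, every line through $x$ is of the first type. Then $q^{-1}(x)=F_x$ is a smooth fiber of the smooth morphism $q|_{\L_1}$, so as a scheme it is smooth of dimension $n-4$. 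Combined with the previous step, $F_x$ is a smooth complete intersection of a cubic and a quadric in $\P^{n-2}$.

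The main obstacle is making sure that scheme-theoretic smoothness of the fiber $q^{-1}(x)$ agrees with smoothness of the scheme cut out by $f_1,f_2,f_3$. This reduces to the identification in (i) holding at the level of schemes, which is exactly the check on the components of $s_f$ above. Non-emptiness is not required for smoothness. Even so, $F_x\neq\emptyset$ always holds for $n\ge5$ by dimension count, since $\L\to X$ is surjective.
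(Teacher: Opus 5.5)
Your argument is correct, and part (ii) takes a different route from the paper.

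\textbf{Part (i).} This is the paper's coordinate computation. Your extra check that $f_1,f_2,f_3$ are exactly the components of $s_f|_{\Sigma_x}$ amounts to the splitting $\mathcal{S}|_{\Sigma_x}\cong\O\oplus\O(-1)$. It makes the identification $q^{-1}(x)=V(f_1,f_2,f_3)$ scheme-theoretic. The paper leaves this implicit, but it is what lets smoothness of the fiber pass to the complete intersection.

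\textbf{Part (ii).} The paper applies generic smoothness (characteristic $0$) to $p_2:\L\to X$, with $\L$ smooth of dimension $2n-5$. This produces an unspecified open $V$. You instead combine Lemma~\ref{second type lines dim}(ii), which identifies $\L_2$ as the non-smooth locus of $q$, with $\dim\L_2=n-2<\dim X$. You then take $V=X\setminus q(\L_2)$, the set of points lying on no line of the second type.

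\textbf{Comparison.} Your version gives an explicit, geometrically meaningful $V$ and does not use generic smoothness. It does lean on the finer results about second-type lines. The paper's version needs only smoothness of $F$ and characteristic $0$.

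\textbf{Redundancy.} Removing $B$ is unnecessary. The proof of Definition and Proposition~\ref{bad locus} shows that $x\in B$ forces $\dim F_{2,x}\ge n-3$, so $B\subset q(\L_2)$. Equivalently, for $x\notin q(\L_2)$ the fiber $F_x$ is a nonempty fiber of the smooth morphism $q_1$ of relative dimension $n-4$. So the dimension count, and with it the complete-intersection claim, follows automatically.
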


\begin{proof}
(i) Let $X\subset \P(V)$ be defined by $f\in \Sym ^3(V^\vee)$. A line $L \subset \mathbb{P}(V)$ corresponds to a 2-plane $W_L \subset V$. The Fano variety $F(X)$ is defined as the closed subvariety of $\mathrm{Gr}(2, V)$:
\[
F(X) = \{ [W] \in \mathrm{Gr}(2, V) \mid f|_W \equiv 0 \} \subset \mathrm{Gr}(2, V).
\]
Fix $x \in X$ with corresponding 1-space $\hat{x} \subset V$. The lines in $\mathbb{P}(V)$ passing through $x$ form a Schubert subvariety $\Sigma_x \subset \mathrm{Gr}(2, V)$ via $W \mapsto W/\hat{x}$:
\[
\Sigma_x = \{ [W] \in \mathrm{Gr}(2, V) \mid \hat{x} \subset W \} \xrightarrow{\sim} \mathbb{P}(V/\hat{x}) \cong \mathbb{P}^{n-1}.
\]
Thus, $F_x = F(X) \cap \Sigma_x$ is a closed subvariety of $\mathbb{P}(V/\hat{x})$.
Pick coordinates such that $\hat{x} = (1, 0, \dots, 0)^T$. Expanding $f(z_0, z') = z_0^2 f_1(z') + z_0 f_2(z') + f_3(z')$ with $z' = (z_1, \dots, z_n)$ (since $f(\hat{x}) = 0$), a line $L_v = \hat{x} \oplus k \cdot v$ for $v = (0, z') \in V/\hat{x}$ lies on $X$ if and only if $f(s \hat{x} + t v) = 0$ for all $[s : t] \in \mathbb{P}^1$. This yields:
\[
f_1(z') = 0, \quad f_2(z') = 0, \quad f_3(z') = 0.
\]
Because $X$ is smooth at $x$, $f_1 \neq 0$ defines the tangent space of $X$ at $x$. Thus, $H_x = \{ [v] \in \mathbb{P}(V/\hat{x}) \mid f_1(v) = 0 \} \cong \mathbb{P}^{n-2}$. Restricting $f_2$ and $f_3$ to $H_x$, we obtain:
\[
F_x = V(f_2|_{H_x}, f_3|_{H_x}) = Q \cap C \subset H_x \cong \mathbb{P}^{n-2}.
\]
This completes the proof of part (i).

(ii) Consider the incidence variety $\mathbb{L} = \{ (L, x) \in F(X) \times X \mid x \in L \}$. Recall that $\mathbb{L}$ is defined as the projective line bundle $\P(\mathcal{S})$ on $F(X)$, where $\mathcal{S}$ is the tautological vector bundle of $F(X)$. The projection $p_1 : \mathbb{L} \to F(X)$ is a $\mathbb{P}^1$-bundle over the smooth variety $F(X)$ of dimension $2n-6$ by Theorem~\ref{F smooth and dimension}, so $\mathbb{L}$ is smooth of dimension $2n-5$. 

By generic smoothness applied to $p_2 : \mathbb{L} \to X$, there exists an open dense subvariety $V\subset X$ such that for all $x\in V$, the fiber $p_2^{-1}(x) \cong F_x$ is smooth of dimension $\dim \mathbb{L} - \dim X = (2n-5) - (n-1) = n-4$. Since $F_x \subset \mathbb{P}^{n-2}$ is cut out by two equations and has dimension $n-4 = (n-2) - 2$, it is a smooth complete intersection.
\end{proof}

Now, we study the lines on $X$ defined over the number field $k$ and the local fields $k_v$. First, we claim that there exists a real line on a cubic hypersurface $X$.
\begin{prop}\label{real line on cubic hyperpsurface}
If $n\ge 3$, then $F(\R)\neq\emp$, i.e., every smooth cubic hypersurface $X$ over $\R$ of dimension $n-1 \ge2$ contains a real line.
\end{prop}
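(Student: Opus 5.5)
We want a real line on a smooth real cubic hypersurface $X\subset\P^n_\R$ with $n\ge3$. The plan is to use the description of $F_x$ from Proposition~\ref{F_x, dimension, smooth, intersection}(i) together with the fact that odd-degree real forms always have real zeros.

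\emph{Step 1: a real point.} First we need $X(\R)\neq\emp$. Restrict $f$ to any real line in $\P^n$. Either the resulting binary cubic vanishes identically, and we already have a real line in $X$, or it is a real binary form of odd degree. Such a form has a real root, giving a point $x\in X(\R)$.

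\emph{Step 2: a real line through $x$.} Choose coordinates with $\hat x=(1,0,\dots,0)$. As in the proof of Proposition~\ref{F_x, dimension, smooth, intersection}(i), lines through $x$ lying on $X$ correspond to real points $[z']\in\P^{n-1}$ with
\[
f_1(z')=f_2(z')=f_3(z')=0 .
\]
Here $f_1\neq0$ is the real linear form defining the tangent hyperplane, by smoothness of $X$ at $x$. So we need a real point of $Q\cap C\subset H_x\cong\P^{n-2}_\R$.

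Q can be anisotropic over $\R$, so we will not try to use $x$ directly. Instead we choose $x$ well, exploiting that $X(\R)$ is a real manifold of dimension $n-1\ge2$ once it is nonempty. Indeed, $x$ is a smooth real point, so the implicit function theorem gives this.

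\emph{Step 3 (main obstacle): choosing $x$ so that $Q$ is isotropic.} The plan is to take $x$ to be a real point of $X$ at which the Hessian-type quadric $Q=V(f_2|_{H_x})$ is indefinite, or degenerate with a real kernel direction.
\begin{itemize}
\item In the degenerate case, a real kernel vector $v$ gives a point $[v]$ of $Q$.
\item Suppose instead $Q$ has a real isotropic cone of positive dimension. Then $Q(\R)$ contains a real subvariety of dimension $n-3\ge1$ (when $n\ge4$). On each real line inside it, or on a real conic of it, $f_3$ restricts to an odd-degree form, or to a degree-$6$ form after parametrizing the conic. In the first case it has a real zero.
\end{itemize}
A cleaner way to get odd degree is to take a real plane section. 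Intersect $Q(\R)$ with a real linear subspace $\Lambda\subset H_x$ through a real point $p\in Q(\R)$, chosen so that $Q\cap\Lambda$ is a real conic with a real point. This conic is then isomorphic to $\P^1_\R$, and $f_3$ restricts to a binary sextic, which is even degree. So this does not immediately work.

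Instead I would use a degeneration/parity argument. Pick $x$ to be a real parabolic point, that is, one where $f_2|_{H_x}$ is degenerate. Such points exist since the determinant of $f_2|_{H_x}$ changes sign on $X(\R)$ unless the second fundamental form is definite everywhere. Definiteness everywhere would make the connected compact hypersurface $X(\R)$ locally strictly convex, and an odd-degree hypersurface cannot be the boundary of a convex region in $\P^n(\R)$, because its real locus is not null-homologous. At a parabolic point with real kernel vector $v$, the line through $x$ in direction $v$ is tangent to order $3$. Adjusting along the parabolic locus, which has dimension $n-2\ge1$, to also kill $f_3(v)$, which is one real condition on an odd-parity family, yields a real line.

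I expect the hard step to be making this last sign-change/parity argument rigorous. As a fallback, I would cite the classical fact that a real smooth cubic surface contains a real line (from $3$ or more real lines in all real types), and reduce to it. To do so, intersect $X$ with a general real $\P^3$; by Bertini the section is smooth, and any real line it contains is a real line of $X$.
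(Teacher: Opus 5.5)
Your fallback paragraph is a complete proof, and it is essentially the paper's argument. You cut $X$ by a real $\P^3$ whose section is smooth; such $\P^3$'s form a nonempty Zariski-open subset of the Grassmannian defined over $\R$, so a real one exists. You then use that a smooth real cubic surface contains a real line. The paper justifies this last fact by parity rather than by Schl\"afli's classification: complex conjugation is an involution on the $27$ complex lines, $27$ is odd, so some line is fixed and is therefore defined over $\R$. You should present this as the proof and discard Steps 2--3, which contain a genuine gap.

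The route through $F_x$ does not work as you set it up.
\begin{itemize}
\item For $n=3$, $F_x\subset H_x\cong\P^1$ is the common zero set of a binary quadric and a binary cubic. A real point $x$ with $F_x(\R)\neq\emptyset$ therefore exists only if a real line already does, so the method is circular in exactly the base case.
\item For $n=4$, $F_x$ is six points and the nonreal ones come in conjugate pairs. Nothing forces one of them to be real; this is the binary sextic you ran into.
\item In Step 3 you claim that $\det(f_2|_{H_x})$ changes sign on $X(\R)$ unless the second fundamental form is definite everywhere. This is a non sequitur. Ruling out global definiteness does not rule out the form being indefinite and nondegenerate at every real point. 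Closed everywhere-hyperbolic surfaces in $\P^3(\R)$ exist, e.g.\ a ruled quadric, and in that situation there is no parabolic point at all.
\item Even on a nonempty parabolic locus, ``adjusting to kill $f_3(v)$'' needs a genuine sign change. Since $f_3(-v)=-f_3(v)$, that sign is not even well defined along the locus. You would need a loop on which the real kernel line bundle is non-orientable, so that $f_3(v)$ becomes a section of a M\"obius bundle and must vanish. You do not produce such a loop.
\item Your first bullet needs $Q(\R)$ to contain a real line, i.e.\ Witt index at least $2$. An indefinite $Q$ need not have this.
\end{itemize}
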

\begin{proof}
Applying Bertini's theorem over the infinite field $\R$, we can cut the cubic hypersurface $X$ by a linear space $L$ into a smooth cubic surface $X_2$. By the well-known 27 lines theorem, $X_2$ contains 27 complex lines. The Galois group $\operatorname{Gal}(\C\mid\R)$ must fix one line since 27 is odd. In fact, in 1858 Schläfli \cite{Schlafli1858} proved that a smooth cubic surface must contain 3, 7, 15, or 27 real lines.
\end{proof}

We hope there are abundant real lines on $X$ when $n\ge4$. We prove a general result for lines over a local field $K$. Since most lines on $X$ are of the first type (i.e., $0$-free), they can deform to sweep out an open subset of $X(K)$.
\begin{prop}\label{Lines on an open neighborhood}
Let $X$ be a smooth geometrically integral cubic hypersurface in $\P^n$ over a local field $K$. If $n\ge4$ and the Fano variety $F(X)$ has $F(X)(K)\neq \emptyset$, then there exists an analytic open subset $V\subset X(K)$ such that $F_x(K)\neq \emptyset$ for all $x\in V$.
\end{prop}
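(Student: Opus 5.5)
The plan is to start from a single $K$-rational line and use the implicit function theorem over $K$ applied to the evaluation map $q:\L\to X$. The key point is that $q$ is smooth exactly on $\L_1$ (Lemma~\ref{second type lines dim}(ii)). So it suffices to produce a $K$-point $(L,x)\in\L_1(K)$. Then the $K$-analytic submersion theorem applied to $q$ at $(L,x)$ gives an analytic open neighbourhood $V$ of $x$ in $X(K)$ over which $q$ admits a $K$-analytic section. In particular every $x'\in V$ lies on a $K$-rational line, i.e.\ $F_{x'}(K)\neq\emptyset$.

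The steps, in order, are as follows.

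First, I will show that $\L(K)\to F(K)$ is surjective and that $\L(K)$, $F(K)$ are $K$-analytic manifolds. Surjectivity holds because $\L=\P(\mathcal S)$ is a projective bundle of a vector bundle, so every $K$-line $L$ has $L(K)\cong\P^1(K)$. The manifold structure comes from Theorem~\ref{F smooth and dimension}, which says $F$ is smooth of dimension $2n-6$, and $\L$ is a $\P^1$-bundle over it.

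Second, I will find a $K$-point of $F_1=F\setminus F_2$. Since $F$ is smooth and $F(K)\neq\emptyset$, the set $F(K)$ is a nonempty $K$-analytic manifold of dimension $2n-6$. A nonempty analytic manifold of dimension $d$ is Zariski dense in every irreducible component through it, being a $d$-dimensional manifold in a $d$-dimensional smooth variety. On the other hand, $F_2$ has dimension $n-3<2n-6$ when $n\ge4$ by Lemma~\ref{second type lines dim}(i). So $F_2(K)$ is contained in a lower-dimensional analytic subset and cannot exhaust the open set of $F(K)$ around our given point. Hence some $L\in F_1(K)$ exists.

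Third, I pick any $x\in L(K)$. Then $(L,x)\in\L_1(K)$, so $dq$ is surjective at $(L,x)$. The $K$-analytic inverse/implicit function theorem, valid over any local field (Serre, \emph{Lie algebras and Lie groups}), then yields that $q(\L(K))$ contains an open neighbourhood $V$ of $x$ in $X(K)$. This $V$ is the required set.

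The main obstacle I expect is the second step: passing from $F(K)\neq\emptyset$ to $F_1(K)\neq\emptyset$. This requires the density statement for smooth points. Concretely, I would argue locally: near the given point $L_0\in F(K)$, choose étale/analytic coordinates identifying a neighbourhood in $F(K)$ with an open ball in $K^{2n-6}$. The closed subvariety $F_2$, being of strictly smaller dimension, is cut out locally by nonzero analytic functions in these coordinates. A nonzero analytic function cannot vanish identically on an open ball in $K^{2n-6}$, since $K$ is infinite and power series identity holds. So points of $F_1(K)$ exist arbitrarily close to $L_0$. A minor additional point is that $L_0$ may lie on a component of $F$. However, $F$ is smooth and connected for $n\ge4$, so it is irreducible and the dimension comparison applies there.
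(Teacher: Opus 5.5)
Your proposal is correct and follows the paper's argument. Density of $F_1$ in the smooth variety $F$ gives a point of $F_1(K)$, hence a point of $\L_1(K)$, and the $K$-analytic implicit function theorem applied to the smooth morphism $q_1$ gives $V$. Your second step just makes explicit the density of $F_1(K)$ in $F(K)$, which the paper asserts in one line; connectedness of $F$ is not needed there, since $F$ smooth of pure dimension $2n-6$ already prevents $F_2$, of dimension $n-3$, from containing a component.
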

\begin{proof}
Recall that $F(X)$ is a smooth $(2n-6)$-dimensional variety, and lines of the first type are generic. By Definition~\ref{defn first type and second type} and Lemma~\ref{second type lines dim}, $F_1(X)$ is open and dense in $F(X)$. Thus, the subset $F_1(X)(K)\subset F(X)(K)$ is open and dense, and thus non-empty. By definition of $\L_1$, there exists a point $([\ell], x)\in \L_1(K)$ with $[\ell]\in F_1(X)(K)$ and $x\in \ell(K)=\P^1_K(K)$. Recall that the morphism
\[
q_1:\L_1 \to X
\]
is smooth by Lemma~\ref{second type lines dim}. We apply the implicit function theorem over $K$ to derive that there exists an analytic open subset $V\subset X(K)$ such that $F_x(K)\supset q_1^{-1}(x)(K)\neq \emptyset$ for all $x\in V$.
\end{proof}

The following lemma guarantees the existence of lines passing through $x\in X$ over a $p$-adic local field $K$ when the dimension of $X$ is sufficiently large.
\begin{lem}\label{local solution for cubic-quadric}
Let $Y=C\cap Q\subset \P^m$ be an intersection of a cubic $C$ and a quadric $Q$ over a $p$-adic local field $K$. If $m\ge 23$, then $Y(K)\neq \emp$.
\end{lem}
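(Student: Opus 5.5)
The idea is to use the quadric to cut out a large linear subspace on which it vanishes identically, and then solve the cubic inside that subspace. Write $Y=C\cap Q\subset\P^m$ with $C=V(g)$ and $Q=V(h)$, where $g$ is a cubic form and $h$ a quadratic form in $N=m+1\ge 24$ variables over the $p$-adic field $K$. It suffices to find a nonzero $v\in K^N$ with $g(v)=h(v)=0$.

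\textbf{Step 1 (the quadric).} I would first show that there is a $K$-linear subspace $W\subset K^N$ of dimension at least $(N-4)/2$ on which $h$ vanishes identically. Assume $\operatorname{char}K\ne 2$, as for finite extensions of $\Q_p$, so that $h$ corresponds to a symmetric bilinear form. Decompose $K^N=R\oplus W'$, where $R$ is the radical of this form and $h|_{W'}$ is nondegenerate of rank $r$. By Witt decomposition,
\[
h|_{W'}\cong \mathbb{H}^{\,i}\perp h_{\mathrm{an}}
\]
with $h_{\mathrm{an}}$ anisotropic. Over a $p$-adic field every quadratic form in at least $5$ variables is isotropic, so $\dim h_{\mathrm{an}}\le 4$. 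Hence the Witt index satisfies $i\ge (r-4)/2$. A maximal totally isotropic subspace of $W'$ has dimension $i$, and adding $R$ gives a subspace $W$ on which $h\equiv 0$ with
\[
\dim W\ge \dim R+\frac{r-4}{2}\ge \frac{N-4}{2}.
\]
For $N\ge 24$ this gives $\dim W\ge 10$.

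\textbf{Step 2 (the cubic).} Next I restrict $g$ to $W$, which gives a cubic form in at least $10$ variables over $K$. If $g|_W\equiv 0$, any nonzero $v\in W$ lies on $Y$. Otherwise I invoke Lewis's theorem: a cubic form over any $p$-adic field in at least $10$ variables has a nontrivial zero. Either way this produces $0\ne v\in W$ with $g(v)=0$. Since $h(v)=0$ automatically, $[v]\in Y(K)$.

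\textbf{Where the difficulty lies.} The only substantive input is Lewis's theorem for cubic forms over arbitrary $p$-adic fields (not just $\Q_p$), which I would cite rather than reprove. Step 1 is standard Witt theory. The numerology $(N-4)/2\ge 10\iff m\ge 23$ shows exactly why the bound in the statement is $23$. The one point needing care is a degenerate quadric. The radical only enlarges the isotropic subspace, so it causes no problem. If $h\equiv 0$, the statement reduces directly to the cubic case.
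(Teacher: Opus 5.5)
Your proof is correct and follows the paper's argument: you use Witt decomposition together with the isotropy of $p$-adic quadratic forms in at least $5$ variables to find a linear subspace of dimension at least $(m-3)/2\ge 10$ on which the quadratic form vanishes identically. You then solve the cubic on that subspace via the $p$-adic result for cubic forms in at least $10$ variables (Lewis's theorem). Your explicit handling of a degenerate quadric (the radical) and of a cubic vanishing identically on the subspace only fills in details the paper leaves implicit.
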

\begin{proof}
This comes from the fact that any homogeneous polynomial of degree $d$ over a $p$-adic local field $K$ in $s$ variables has a non-trivial zero provided $s> d^2$ when $d=2$ and $d=3$ (while it fails when $d=4$; for references and the history of Artin's conjecture, see \cite[1.2.5]{poonen2017rational}).

We seek a linear space in $\P^m$ contained in the quadric $Q$. Since any quadratic form in $s$ variables with $s\ge 5$ is isotropic over $K$, by Witt's decomposition, there is a linear space $L\cong \P^{w-1}\subset \P^m$ contained in $Q$ with $w\ge \frac{m-3}{2}$. Then, if $w\ge 10$, the cubic polynomial $f_3$ defining $C$ has a zero on $L\cong\P^{w-1}$. Thus, $C\cap Q$ contains a $K$-point.
\end{proof}

As a preparation for the proof of Theorem~\ref{cubic facts}, we show that projective smooth complete intersections are geometrically integral.
\begin{lem}\label{smooth complete to irreducible}
Let $Y\subset \P_k^m$ be a smooth complete intersection with $\dim Y\ge 1$. Then $Y$ is geometrically integral.
\end{lem}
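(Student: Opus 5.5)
The plan is to base change to an algebraic closure $\bar k$ and show that $Y_{\bar k}$ is connected. Smoothness then gives integrality. Smoothness and being a complete intersection are both preserved by base change, so we may assume $k$ is algebraically closed. We must show that $Y$ is irreducible and reduced. Reducedness is immediate, since a smooth scheme over a field is regular, hence reduced. A smooth scheme is also normal, so its local rings are domains. In particular, distinct irreducible components of $Y$ cannot meet: a point on two components would have a local ring with two minimal primes, which is not a domain. Hence $Y$ is the disjoint union of its irreducible components, and irreducibility reduces to connectedness.

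For connectedness, the plan is to show $H^0(Y,\O_Y)=k$. Write $Y=V(g_1,\dots,g_r)\subset \P^m$ with $\deg g_i=d_i$, $r=m-\dim Y$, and $g_1,\dots,g_r$ a regular sequence in the homogeneous coordinate ring. Set $Y_j=V(g_1,\dots,g_j)$, with $Y_0=\P^m$. For each $j$ use the Koszul short exact sequence
\[
0\to \O_{Y_{j-1}}(t-d_j)\xrightarrow{\cdot g_j}\O_{Y_{j-1}}(t)\to \O_{Y_j}(t)\to 0 .
\]
The input is Serre's computation that $H^i(\P^m,\O(t))=0$ for $0<i<m$ and all $t$. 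By induction on $j$ this gives $H^i(Y_j,\O_{Y_j}(t))=0$ for $0<i<\dim Y_j=m-j$ and all $t$. It also gives that $H^0(\P^m,\O(t))\to H^0(Y_j,\O_{Y_j}(t))$ is surjective. The surjectivity at step $j$ uses $H^1(Y_{j-1},\O_{Y_{j-1}}(t-d_j))=0$, which holds because $\dim Y_{j-1}\ge 2$ whenever $\dim Y_j\ge 1$. Taking $t=0$ and $j=r$, we get that $H^0(Y,\O_Y)$ is a quotient of $H^0(\P^m,\O)=k$. It is nonzero since $Y\neq\emptyset$: a nonempty intersection is guaranteed by dimension theory because $\dim Y\ge 1$, or simply by hypothesis. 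Therefore $H^0(Y,\O_Y)=k$, so $Y$ is connected, and with the previous paragraph $Y$ is integral.

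The main point needing care is the index bookkeeping in the induction. The vanishing of $H^1(Y_{j-1},\O(\ast))$ must be available at every stage, and this is exactly where the hypothesis $\dim Y\ge 1$ enters: it ensures every intermediate $Y_{j-1}$ has dimension at least $2$. The cokernel argument uses the long exact sequence in cohomology, and the vanishing range $0<i<m-j$ propagates from $Y_{j-1}$ to $Y_j$ by the same sequence. Finally, the conclusion for $\bar k$ is precisely the statement that $Y$ is geometrically integral.
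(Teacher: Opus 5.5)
Your proof is correct and follows the paper's route. After passing to $\bar k$, smoothness makes every local ring a domain, so $Y$ is reduced with pairwise disjoint irreducible components, and irreducibility reduces to connectedness of a positive-dimensional complete intersection. The only difference is that the paper cites \cite[Chapter II, Exercise 8.4]{Hartshorne1977} for that connectedness, whereas you prove it directly from the Koszul sequences and the vanishing $H^i(\P^m,\O(t))=0$ for $0<i<m$ (essentially \cite[Chapter III, Exercise 5.5]{Hartshorne1977}), and your index bookkeeping there is right.
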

\begin{proof}
We can assume $k=\overline{k}$ is algebraically closed by base change. The assumption that $Y$ is smooth means that for every closed point $y\in Y$, the local ring $\O_{Y,y}$ is regular, and thus integral. So $Y$ is reduced and locally irreducible. Recall that every complete intersection with positive dimension is connected by \cite[Chapter II, Exercise 8.4]{Hartshorne1977}. Since $Y$ is locally irreducible and connected, it is irreducible.
\end{proof}

Now we gather the facts regarding the weak approximation of $X$ and the rational points in $F_x$, which will be used in the proof of the main Theorem~\ref{cubic AP}. Essentially, all claims are consequences of previously known results.
\begin{thm}\label{cubic facts}
Let $X$ be a smooth geometrically integral cubic hypersurface in $\P^{n}$ over a number field $k$. Then the following statements hold:

(i) Suppose that $n\ge 18$. Then $X(k)\neq\emptyset$ and $X$ satisfies weak approximation.

(ii) Suppose that $n\ge 323$ and $\Omega_{k,\mathrm{real}}\neq \emp$. There exists an open dense subvariety $V\subset X$ defined over $k$ and a nonempty open subset $\prod_{v \;\mathrm{real}} W'_v\subset \prod_{v \;\mathrm{real}} X(k_v)$ such that for $x\in V(k)\cap \prod_{v \;\mathrm{real}} W'_v$, the variety $F_x$ has Zariski dense rational points over $k$.

Furthermore, $F_x$ is smooth geometrically integral of dimension $n-4$ for $x\in V(k)$.

(ii') Suppose that $n\ge 323$ and $\Omega_{k,\mathrm{real}}=\emp$. There exists an open dense subvariety $V\subset X$ defined over $k$ such that for $x\in V(k)$, the variety $F_x$ has Zariski dense rational points over $k$.

Furthermore, $F_x$ is smooth geometrically integral of dimension $n-4$ for $x\in V(k)$.

(iii) If $k=\Q$, then statement (ii) holds provided $n\ge 30$ instead of $n\ge 323$.
\end{thm}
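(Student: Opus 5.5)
The plan is to treat each part as a direct application of known circle-method results, with the geometric input supplied by Proposition~\ref{F_x, dimension, smooth, intersection}, Lemma~\ref{smooth complete to irreducible}, Proposition~\ref{Lines on an open neighborhood} and Lemma~\ref{local solution for cubic-quadric}.

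\textbf{Part (i).} I would quote Skinner \cite{Skinner1997}. For a smooth cubic form over a number field in at least $17$ variables (here $n+1\ge 19$), the Hasse principle and weak approximation hold. Local solubility at finite places follows from the $p$-adic fact already used in Lemma~\ref{local solution for cubic-quadric}: $s>9$ variables suffice. At real places a cubic always has a real zero. Hence $X(k)\neq\emptyset$ and $X$ satisfies weak approximation.

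\textbf{Parts (ii) and (ii').} By Proposition~\ref{F_x, dimension, smooth, intersection}(ii) there is an open dense $V\subset X$, which I may take defined over $k$ by Galois descent. For every $x\in V$, the scheme $F_x$ is a smooth complete intersection $C\cap Q\subset\P^{n-2}$ of dimension $n-4\ge 1$. Lemma~\ref{smooth complete to irreducible} then gives geometric integrality, which is the ``furthermore'' clause.

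For the density of rational points I would invoke a circle-method theorem for nonsingular systems consisting of one cubic and one quadric over number fields. The natural source is the Birch-type results over number fields (Skinner's generalization of Birch's theorem, or the Browning--Heath-Brown style bounds for forms of differing degree). Such a theorem states that if the singular locus is small and the number of variables is large, then the Hasse principle and weak approximation hold. Since $F_x$ is smooth, the relevant Birch singular locus is controlled, and the variable bound should translate into $n-1\ge 322$, i.e.\ $n\ge 323$. I expect the main obstacle to be pinning down exactly this numerical threshold and checking that smoothness of $C\cap Q$ gives the Birch rank condition for the system. If necessary I would pass to the pencil-type condition used in those theorems, shrinking $V$ to ensure it.

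Granting the theorem, weak approximation on a smooth geometrically integral variety with a rational point implies Zariski density of rational points. So it remains to check local solubility at every place:
\begin{enumerate}
\item At finite places, $m=n-2\ge 23$, so Lemma~\ref{local solution for cubic-quadric} applies.
\item At complex places there is nothing to check.
\item At real places, Proposition~\ref{real line on cubic hyperpsurface} gives $F(k_v)\neq\emptyset$. Proposition~\ref{Lines on an open neighborhood} then yields a nonempty open $W'_v\subset X(k_v)$ on which $F_x(k_v)\neq\emptyset$.
\end{enumerate}
Requiring $x\in\prod_{v\text{ real}}W'_v$ gives (ii). When $k$ has no real places this condition is vacuous, giving (ii').

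\textbf{Part (iii).} For $k=\Q$ I would substitute the sharper results available over $\Q$ for a system of a quadric and a cubic. Browning, Dietmann and Heath-Brown treat intersections of a cubic and a quadric, needing roughly $29$ variables for nonsingular intersections. That gives $n-1\ge 29$, i.e.\ $n\ge 30$. The local checks at finite places still need $n-2\ge 23$, which holds. The rest of the argument is unchanged.
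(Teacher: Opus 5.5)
Your treatment of (i), (ii) and (ii') follows the paper's proof. The ingredients match step by step:
\begin{itemize}
\item Proposition~\ref{F_x, dimension, smooth, intersection}(ii) and Lemma~\ref{smooth complete to irreducible} for the geometry of $F_x$ on an open $V$. This $V$ is already defined over $k$, since it comes from generic smoothness of $\mathbb{L}\to X$, so no Galois descent is needed.
\item Propositions~\ref{real line on cubic hyperpsurface} and~\ref{Lines on an open neighborhood} at real places.
\item Lemma~\ref{local solution for cubic-quadric} at finite places.
\item An off-the-shelf circle-method theorem giving weak approximation, hence Zariski density.
\end{itemize}
For (i) the paper cites \cite{jones2013weak} rather than \cite{Skinner1997}; either suffices for $n\ge 18$. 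For (ii) the paper simply invokes \cite[Theorem 3.1]{Wit15}. You do not need a Birch-rank or pencil condition, nor any further shrinking of $V$. The threshold is exactly the Browning--Heath-Brown-type condition $\dim Y\ge(\deg Y-1)2^{\deg Y}-1$ for a smooth geometrically integral $Y$. With $\deg F_x=6$ this reads $n-4\ge 319$, i.e.\ $n\ge 323$, which resolves the obstacle you flagged.

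The genuine gap is in (iii). The result of \cite{Browningetal2015} for nonsingular intersections of a cubic and a quadric in at least $29$ variables is stated as a Hasse principle, not weak approximation. So ``the rest of the argument is unchanged'' fails: your deduction of Zariski density in (ii) went through weak approximation, and a single rational point on $F_x$ gives no density.

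The paper closes this by observing that the circle method in \cite{Browningetal2015} actually proves an asymptotic formula for a weighted count $N_\omega(Y;P)$ with positive main term. This yields Zariski density of $Y(\Q)$, as noted after \cite[Lemma 8.1]{Browningetal2015}. In effect, localizing the weight near real points of $Y$ produces rational points in every small open subset of a nonempty open piece of $Y(\R)$. That piece is Zariski dense in $Y$ because $Y$ is smooth and geometrically integral. You need to add this step, or some other density argument, to complete (iii).
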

\begin{proof}
The first claim (i) comes from \cite{jones2013weak}.

Now suppose that $n\ge 25$ and $\Omega_{k,\mathrm{real}}\neq \emp$. Recall that there exists an open dense subvariety $V\subset X$ such that for all $x\in V$, $F_x$ is a smooth complete intersection $C\cap Q$ of a cubic and a quadric in $\P^{n-2}$ by Proposition~\ref{F_x, dimension, smooth, intersection}. Furthermore, there exists an open subset $\prod_{v \;\mathrm{real}} W'_v\subset \prod_{v \;\mathrm{real}} X(k_v)$ such that for $x \in V(k) \cap \prod_{v \;\mathrm{real}} W'_v $, $\prod_{v \;\mathrm{real}} F_x(k_v)\neq \emp$ by Proposition~\ref{real line on cubic hyperpsurface} and Proposition~\ref{Lines on an open neighborhood}. Also, $F_x(k_v)\neq \emp$ for finite places $v$ by Lemma~\ref{local solution for cubic-quadric} since $n-2\ge 23$.

In summary, supposing that $n\ge 25$ and $\Omega_{k,\mathrm{real}}\neq \emp$, for $x\in V(k)\cap \prod_{v \;\mathrm{real}} W'_v $, we have $F_x(\A_k)\neq \emp$ and $F_x$ is a smooth complete intersection $C\cap Q$ of a cubic and a quadric in $\P^{n-2}$. Therefore, $F_x$ is smooth geometrically integral of dimension $n-4$ by Lemma \ref{smooth complete to irreducible}.

Then the second claim (ii) comes from \cite[Theorem 3.1]{Wit15}, which implies that $F_x$ satisfies weak approximation since $F_x(\A_k)\neq\emptyset$ and $F_x\cong C\cap Q \subset \P^{n-2}$ is a smooth complete intersection. As a consequence, $F_x$ has Zariski dense rational points over $k$.

The variation of the second claim (ii') can be proved by the same argument dropping the theory at real places.

The third claim comes from \cite[Theorem 1.3]{Browningetal2015}. Although they only claimed the Hasse principle in their theorem for $Y$ over $\Q$ with $n\ge 30$, they used the circle method to establish an asymptotic formula for $N_\omega(Y;P)$, which implies that $Y(\Q)$ is Zariski dense in $Y$, as they clearly claimed in the argument following \cite[Lemma 8.1]{Browningetal2015}.
\end{proof}

\section{Moduli Spaces of Conics}

In this section, we introduce the moduli space $M$ of conics on a cubic hypersurface $X$. We then study $M$ via the correspondences between the Fano variety $F$ of lines and the moduli space $M$ of conics on $X$. In particular, the correspondence \ref{birational between F_x and M_x',x''} is critical; its combination with Theorem~\ref{cubic facts} establishes an abundance of $k$-rational conics $C$ on $X$ passing through general $k$-rational points $x',x''\in X(k)$ when the dimension of $X$ is sufficiently large.
\bigskip

The moduli space $M$ of conics over $X$ is defined as follows. Recall that every conic in $\P^n$ lies on a plane $\Sigma\cong \P^2$, and its equation is defined by six coefficients of $x^iy^jz^k$ where $i+j+k=2$. So we can define $M$ via the Grassmannian $\G=\G(2,n)$.

First, all planes in $\P^{n}$ are parameterized by $\G$. Given the tautological bundle $\mathcal{S}$ of $\G$, all conics in $\P^{n}$ form the space $\mathcal{C}=\P(\mathrm{Sym}^2_\G (\mathcal{S}^\vee))$, which is a $\P^5$-bundle over $\G$. We explore the condition for them to lie on the cubic hypersurface $X$. Consider the universal family $\U= \{(C,p)\in \mathcal{C} \times \P^{n} \mid p \in C\}$, and projections $p:\U\to \mathcal{C}$, $q:\U \to \P^{n}$. We define $\mathcal{E}=p_*q^* \O(3)$ as a coherent sheaf on $\mathcal{C}$. 

The projection $p: \mathcal{U} \to \mathcal{C}$ is a proper flat morphism (as $\mathcal{U}$ is a family of conics over the parameter scheme $\mathcal{C}$). The fiber $p^{-1}(c)$ over any point $c = [C] \in \mathcal{C}$ is isomorphic to $C$. The restriction of $q^* \mathcal{O}_{\mathbb{P}^n}(3)$ to $p^{-1}(c)$ is $\mathcal{O}_C(3)$. We can compute that:
\[
h^0\big(p^{-1}(c), q^*\mathcal{O}_{\P^n}(3)|_{p^{-1}(c)}\big) = h^0\big(C, \mathcal{O}_{C}(3)\big) = 7 \quad \text{for all } c \in \mathcal{C}.
\]
So $\mathcal{E}$ is a vector bundle of rank $7$ over $\mathcal{C}$ by Grauert's theorem.

Let the cubic hypersurface $X\subset \P^{n}$ be defined by $G=0$ with $G\in H^0(\P^n,\O(3))$. Then $G$ induces a section $s_G\in \Gamma(\mathcal{C},\mathcal{E})$, and the moduli space $M$ of conics on $X$ is defined as the zero locus of $s_G$ in $\mathcal{C}$. Also, we introduce the closed subschemes $M_x$ and $M_{x',x''}$ parameterizing conics passing through a point $x\in X$ and points $x',x''\in X$, respectively. They are defined as the fibers of the evaluation maps from the universal family to $X$. This is similar to the definition of $F_x$ (see Definition~\ref{defn Fano variety}).

\begin{defn}\label{defn of M Mx,x'}
(i) The moduli space $M$ of conics on the cubic hypersurface $X$ is defined as the zero locus of $s_G$ on $\mathcal{C}$ using the notation above. It is a projective scheme over $k$.

(ii) The moduli space $M_{x',x''}$ parameterizing conics passing through both $x',x''\in X$ is a closed subscheme of $M$.

(iii) The moduli space $M_x$ parameterizing conics passing through $x\in X$ is a closed subscheme of $M$.
\end{defn}

Note that every $C\in M$ is a conic on a plane. If it is geometrically irreducible and contains a rational point, then $C\cong \P^1$. We introduce the closed subscheme $\Delta\subset M$ parameterizing degenerate (i.e., geometrically reducible) conics in $M$. Since each $C\in M$ is defined by a quadratic form $Q$ over a plane $\Sigma$, it is degenerate if and only if $\det Q = 0$. This can be measured as follows.

Note that $Q$ is a section of $\Sym^2(\mathcal{S}^\vee)$, which is a map $Q:\mathcal{S}\to \mathcal{S}^\vee$, then the determinant morphism $\det Q: \det \mathcal{S}\to \det \mathcal{S}^\vee$ is a section of $(\det (\mathcal{S}^\vee))^{\otimes 2}$. The quadratic form $Q$ also corresponds to a $3\times 3$ symmetric matrix, and its determinant is a cubic homogeneous polynomial of its entries. So the determinant morphism is $\det_\G: \Sym^3(\Sym^2(\mathcal{S}^\vee))\to (\det(\mathcal{S}^\vee))^{\otimes 2}$. Recall that each point $P$ of $\CC=\P(\Sym^2(\mathcal{S}^\vee))$ is a pair consisting of a plane and a quadratic form $P=(\Sigma,Q)$. We can define the tautological line bundle $\O_\CC(-1)$ on $\CC$ whose fiber at $P$ is the line spanned by the quadratic form $Q$ in $\Sym^2(\Sigma^\vee)$, so that $\O_\CC(-1)\hookrightarrow \pi^* \Sym^2(\mathcal{S}^\vee)$ is an immersion where $\pi: \CC \to \mathbb{G}$ is the projection. Then we can pull back $\det_\G$ by $\pi$ and tensor it with $\O_\CC(3)$, obtaining $\O_\CC\to \LL$, where $\LL=\pi^*((\det(\mathcal{S}^\vee))^{\otimes 2})\otimes \O_\CC(3)$. This corresponds to a global section $\sigma_{\det}\in \Gamma(\CC,\LL)$. Recall that $M\subset \CC$ is defined by the zero locus $Z(s_G)$. We can define the closed subscheme $\Delta\subset M$ as the zero locus $Z(s_G\oplus \sigma_{\det})$ in $\CC$.

\begin{defn}
(i) The closed subscheme $\Delta \subset M$ parameterizing those degenerate conics in $M$ is defined as the zero locus $Z(s_G\oplus \sigma_{\det})$ in $\CC$.

(ii) The moduli space $\Delta_{x',x''}$ parameterizing those degenerate conics passing through $x'$ and $x''\in X$ is a closed subscheme of $\Delta$. 

(iii) The moduli space $\Delta_x$ parameterizing those degenerate conics passing through $x\in X$ is a closed subscheme of $\Delta$.
\end{defn}

Therefore, we can define the moduli space of non-degenerate conics in $M$ as $M\setminus \Delta$. 

\begin{defn}
(i) The moduli space of non-degenerate conics in $M$ is defined as $M^{\mathrm{nd}}=M\setminus \Delta$.

(ii) Similarly, we can define $M^{\mathrm{nd}}_{x}$ and $M^{\mathrm{nd}}_{x',x''}$ as the closed subschemes of $M^{\mathrm{nd}}$ parameterizing those non-degenerate conics passing through $x$ and $x',x''$, respectively.
\end{defn}

To establish the existence of non-degenerate conics in various moduli spaces, we estimate the dimensions of moduli spaces of degenerate conics in the following proposition.

\begin{prop}\label{dim-Deltax,x'}
(i) The moduli space $\Delta$ parameterizing degenerate conics is of dimension $3n-9$.

(ii) There exists an open dense subvariety $V\subset X\times X$ such that for all $(x',x'')\in V$, we have $\dim \Delta_{x',x''}\le n-5$.
\end{prop}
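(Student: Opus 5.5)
The plan for both parts is to identify degenerate conics on $X$ with configurations of lines on $X$, and then count dimensions using the incidence variety $\L$ and the fibers $F_x$.

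\emph{Part (i).} A geometrically reducible conic on $X$ is one of two kinds.
\begin{enumerate}
\item[(a)] It is $L_1\cup L_2$ with $L_1\ne L_2$ two lines of $F$ meeting in a point $x\in X$. The plane $\Sigma$ is then determined as the span of $L_1$ and $L_2$.
\item[(b)] It is a double line $2L$ in some plane $\Sigma\supset L$ with $2L\subset X$.
\end{enumerate}

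For type (a), consider $\L\times_X\L=\{(L_1,L_2,x): x\in L_1\cap L_2\}$. Off the bad locus $B$ of Definition and Proposition~\ref{bad locus}, the fibers of $\L\times_X\L\to X$ are $F_x\times F_x$, of dimension $2(n-4)$. This part therefore has dimension $2(n-4)+(n-1)=3n-9$. Over $B$ we have $\dim B\le 1$, and the fibers satisfy $\dim F_x\le\dim\L_2= n-2$. That contribution has dimension at most $1+2(n-2)=2n-3$, which is less than $3n-9$ for $n\ge 7$; for $n=5,6$ I would instead use the finer bound from the proof of Definition and Proposition~\ref{bad locus}. Removing the diagonal $L_1=L_2$, the map $(L_1,L_2,x)\mapsto L_1\cup L_2\in\Delta$ is quasi-finite, in fact generically $2\!:\!1$. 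Its image therefore has dimension exactly $3n-9$, and it is nonempty because the generic $F_x$ has dimension $n-4\ge1$, so it contains two distinct lines.

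For type (b), fix $L\in F$, which varies in a family of dimension $2n-6$ by Theorem~\ref{F smooth and dimension}. Planes $\Sigma\supset L$ form a $\P^{n-2}$. Write $f|_\Sigma=\ell\cdot q$ with $q$ a conic. The condition $2L\subset X$ says $\ell\mid q$, which is the vanishing of $q|_L\in H^0(\O_L(2))$, i.e.\ $3$ conditions. This locus has dimension at most $(2n-6)+(n-2)-3=3n-11<3n-9$, so it does not affect $\dim\Delta=3n-9$. The one point to check is that these $3$ conditions are not degenerate for all $L$, or at least that any failure happens only in small dimension. I expect this to be the main technical step: it should follow from smoothness of $X$ along $L$, since the map $\Sigma\mapsto q|_L$ is linear in the normal direction. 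If it fails, I would bound crudely using $F_2(X)$ and Lemma~\ref{second type lines dim}.

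\emph{Part (ii).} Form the incidence variety
\[
I=\{(D,x',x'') : D\in\Delta,\ x',x''\in D\}\subset \Delta\times X\times X.
\]
Since each conic is a curve, $\dim I\le \dim\Delta+2=3n-7$. The fiber of $I\to X\times X$ over $(x',x'')$ is $\Delta_{x',x''}$, and $\dim X\times X=2n-2$. Two cases arise.
\begin{itemize}
\item If $I\to X\times X$ is not dominant, take $V$ to be the complement of the closure of the image; then $\Delta_{x',x''}=\emptyset$ for $(x',x'')\in V$.
\item Otherwise, by the fiber dimension theorem there is a dense open $V$ over which every fiber has dimension $\dim I-(2n-2)\le n-5$.
\end{itemize}
To be careful, apply this argument to each irreducible component of $I$ separately, and intersect the resulting open sets to get a single $V$.

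As a sanity check on the bound, configurations where $x',x''$ lie on a common line of $X$ form the image of $\L\times_F\L$, of dimension $2n-4<2n-2$, and hence are avoided by a general pair. This matches the expected picture: for general $(x',x'')$, $\Delta_{x',x''}$ consists of pairs $L'\ni x'$, $L''\ni x''$ with $L'\cap L''\neq\emptyset$. The lines $L'$ range over $F_{x'}$, which has dimension $n-4$, and meeting the $(n-4)$-dimensional family of lines through $x''$ imposes one condition, giving $n-5$.
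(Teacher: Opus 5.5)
Your proposal is correct and follows essentially the paper's proof. You use the same split of $\Delta$ into pairs of lines (fibred over the intersection point, fibre $F_x\times F_x$) and double lines (fibred over $F$), and the same incidence-variety count over $X\times X$ for (ii); your handling of the bad locus $B$ and of the irreducible components of $I$ is more careful than the paper's, which only counts over a general $x$.

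For double lines, the paper avoids your independence question with the cruder observation that $\Sigma\subset T_pX$ for a fixed $p\in L$, so the fibre lies in a $\P^{n-3}$ and $\dim\le 3n-9$. In your sharper count, the rank of $v\mapsto q|_L$ is $3-h^1(N_{L/X}(-1))$, so it equals $3$ exactly for lines of the first type. Smoothness alone only gives rank $\ge 2$, but that still suffices, since it yields $3n-10<3n-9$.
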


\begin{proof} 
Note that every element of $\Delta\subset M$ is a pair $(\Sigma,Q)$ where $\Sigma$ is a plane and $Q$ is a quadratic form on $\Sigma$ such that $\det Q=0$ and $V(Q)\subset X$. We denote the conic $V(Q)$ by $C$. When $\operatorname{rank} Q=2$, $C$ is a union of two lines with a unique intersection point in $\Sigma$. When $\operatorname{rank} Q=1$, $C$ is a double line $2L$. We call a degenerate conic with $\operatorname{rank} Q=2$ a $1$-degenerate conic, and a degenerate conic with $\operatorname{rank} Q=1$ a $2$-degenerate conic. Let $\Delta_2$ be the closed subscheme of $\Delta$ parameterizing the $2$-degenerate conics. Then $\Delta_1= \Delta \setminus \Delta_2$ is the open subscheme parameterizing the $1$-degenerate conics. We now compute the dimensions of $\Delta_1$ and $\Delta_2$, respectively. 

Let $\phi: \Delta_1 \to X$ be defined by sending a $1$-degenerate conic to the intersection point of its pair of lines. For a general $x\in X$, the pair of lines $(L_1,L_2)$ with the intersection point $x$ runs in $(F_x \times F_x) \setminus \Delta_{F_x}$, where $\Delta_{F_x}$ is the diagonal. Recall that $\dim F_x=n-4$ for general $x\in X$ by Proposition~\ref{F_x, dimension, smooth, intersection}. So we have $\dim \Delta_1=2\dim F_x+\dim X=3n-9$.

Let $\psi: \Delta_2 \to F$ be defined by sending a $2$-degenerate conic $(\Sigma, Q)$, which is a double line $2L$, to the line $L$. For each $L\in F$, the fiber $\psi^{-1}(L)$ is parameterized by planes $\Sigma \subset \P^n$ such that $\Sigma \cap X$ contains the double line $2L$. Fix a closed point $p\in L$. If $\Sigma \cap X \supset 2L$, then $\Sigma\subset T_p X$, where $T_p X\cong \P^{n-1}$ is the tangent hyperplane of $X$ at the point $p$. Note that those planes $\Sigma$ containing $L$ in $T_p X\cong \P^{n-1}$ are parameterized by $\operatorname{Gr}(1,n-2)\cong \P^{n-3}$. So $\dim \psi^{-1}(L) \le n-3$, and thus $\dim \Delta_2 \le \dim F +\dim \psi^{-1}(L)\le 3n-9$ by Theorem~\ref{F smooth and dimension}.

Combining the facts above, we have $\dim \Delta=3n-9$ since $\Delta=\Delta_1\cup \Delta_2$. This completes the proof of (i).

Now we construct the universal family 
\[
\Delta^{(2)}=\{(C,x',x'')\in \Delta \times X\times X\mid x',x''\in C \}.
\]
Then $\Delta_{x',x''}=p_X^{-1}(x',x'')$ by definition, where $p_X:\Delta^{(2)}\to X\times X$ is the projection. If $p_X$ is not dominant, $\Delta_{x',x''}$ is empty for general $x',x''$. If $p_X$ is dominant, we have $\dim \Delta_{x',x''}= \dim \Delta +2- 2\dim X=n-5$ for general $x',x''$ by generic flatness. This completes the proof of (ii). 
\end{proof}

Fixing a point $x'\in X$, every line $L_{x,x'}\not\subset X$ meeting $X$ at $x$ and $x'$ will intersect the cubic $X$ at a third point. This can be stated as follows.
\begin{defp}\label{involution varphi_x'}
Let $X\subset \P^n$ be a smooth geometrically integral cubic hypersurface defined over a field $k'$ with $\charf k'=0$. Fix a closed $k'$-point $x' \in X(k')$. Let $V_{x'}=\{x\in X \mid x\neq x',\;L_{x,x'}\not\subset X\}$ be the open dense subvariety of $X$ where $L_{x,x'}$ is the line passing through $x,x'$. Let $\varphi_{x'} : X \dashrightarrow X$ be the residual map
sending a point $x \in X$ to the residual intersection point $(L_{x,x'}\cap X)- \{x,x'\}$. Then

(i) The restriction of the residual map
\[
\varphi_{x'}:V_{x'}\rightarrow X
\] 
defines a morphism of schemes over $k'$.

(ii) Define the locus
\[
V^{*}_{x'}=\{x''\in V_{x'}\mid \varphi_{x'}(x'')\neq x'',\: \varphi_{x'}(x'')\neq x',\: x''\neq x'\}.
\]
Then $V^*_{x'}$ forms an open dense subvariety of $X$. Furthermore, $\varphi_{x'}$ restricts to an involution morphism 
\[
\varphi_{x'}:V^{*}_{x'} \rightarrow V^{*}_{x'}.
\]
\end{defp}

\begin{proof}
    (i) We show that the residual map $\varphi_{x'}$ can be formulated in coordinate maps.

Without loss of generality, via a projective change of coordinates on $\mathbb{P}^n$, we set the fixed point $x'$ as $x' = (1 : 0 : \dots : 0)$. For any point $x = (x_0 : x_1 : \dots : x_n) \in X$, any point on the line $L_{x,x'}$ can be parametrized as:
\[
\mathbb{P}^1 \to \mathbb{P}^n, \quad (s : t) \mapsto s x' + t x = (s + t x_0 : t x_1 : \dots : t x_n).
\]
Substituting this parametrization into the cubic defining equation $f(X_0, \dots, X_n) = 0$, we obtain a homogeneous cubic polynomial in $(s : t)$, denoted by $G(s, t;x) := f(s x' + t x)$. Note that $G(s,t;x)$ is a non-zero polynomial in $s,t,x$ since $f\neq 0$. We have $G(s,0;x)=G(0,t;x)=0$ since $f(x)=f(x')=0$, so the polynomial $G(s,t;x)\in k'[s,t,x]$ factors uniquely into the form :
\[
G(s, t;x) = s t \cdot \big( A(x) s + B(x) t \big) ,
\]
where $A(x)$ and $B(x)$ are homogeneous polynomials in the coordinates $(x_0, \dots, x_n)$ of $x$, with degrees $\deg(A) = 1$ and $\deg(B) = 2$, respectively.

Among the three roots of $G(s, t;x)$, $s=0$ and $t=0$ correspond to points $x$ and $x'$, respectively.
The remaining third root corresponds to the linear factor:
$A(x) s + B(x) t = 0$.
Substituting this root back into the parametrization of the line yields the homogeneous coordinates for the third intersection point $\varphi_{x'}(x)$:
\[
\varphi_{x'}(x) = -B(x) x' + A(x) x = \big( -B(x) + A(x)x_0 \;:\; A(x)x_1 \;:\; \dots \;:\; A(x)x_n \big).
\]
This expression demonstrates that $\varphi_{x'}$ is globally defined by homogeneous polynomials, making it a rational map.

Now, we prove that $\varphi_{x'}$ restricts to a morphism on $V_{x'}$. 

If all the component polynomials of $\varphi_{x'}(x)$ vanish simultaneously:
$-B(x) x' + A(x) x = 0 $, then either $x=x'$ or $A(x) = B(x) = 0$. If $A(x)=B(x)=0$, then $f(s x' + t x) = 0$ for all $(s : t)$, which means the entire line $L_{x,x'}$ is contained in $X$. Recall that $V_{x'}$ is defined by:
\[
V_{x'} = \{ x \in X \mid x\neq x',\;L_{x,x'} \not\subset X \}.
\]
Thus, for every $x \in V_{x'}$, the components of $\varphi_{x'}(x)$ do not vanish simultaneously. Consequently, the coordinate map $\varphi_{x'}(x) = -B(x) x' + A(x) x$ is well-defined on $V_{x'}$. This completes the proof of part (i).

\medskip
(ii) We prove that $V^{\ell}_{x'}=V_{x'} \setminus V_{x'}^*$ forms a strict closed subvariety of $V_{x'}$. For $x''\in V_{x'}$, the condition $\varphi_{x'}(x'')=x'$ is equivalent to the line $L_{x',x''}$ passing through $x',x''$ being tangent to $X$ at $x'$, i.e., $x''\in T_{x'}(X)$. On the other hand, $T_{x'}(X)\cap X \subset X$ is a strict closed subvariety of $X$.

The condition $\varphi_{x'}(x'')=x''$ is equivalent to the line $L_{x',x''}$ passing through $x',x''$ being tangent to $X$ at $x''$, i.e., $x'\in T_{x''}(X)$. Let $X\subset \mathbb{P}(V)$ be defined by $f=0$. Then the tangent hyperplane $T_{y}X$ at $y$ is defined by the zero locus of 
\[
P(x,y)=\sum_i x_i \frac{\partial f}{\partial X_i}(y),
\]
in the variable $x$. Thus, $x'\in T_{x''}(X)$ is equivalent to $P(x',x'')=0$ which is a closed condition. Furthermore, if the quadratic form $P(x',\cdot)$ vanishes on $X$ for fixed $x'$, then $P(x',\cdot)$ is divisible by $f(\cdot)$ in the polynomial ring since $X$ is integral, which forces $P(x',\cdot)=0$. We claim that $x'$ is a singular point in this case, which yields a contradiction. So the locus of $x''$ satisfying $x'\in T_{x''}(X)$ forms a strict closed subvariety of $X$.

Now we prove that $P(x',Y)=0$ implies that $x'$ is a singular point of $X$. Since $f(Y)$ is a cubic homogeneous polynomial over $k'$ with $\mathrm{char}k'=0$, we can write it in terms of coefficients $a_{ijk} \in k'$ which are fully symmetric under permutations of $i,j,k$:
\[
f(Y) = \sum_{i,j,k=0}^n a_{ijk} Y_i Y_j Y_k
\]
Calculating the partial derivatives gives:
\[
P(x', Y) = \sum_{i=0}^n x'_i \left( 3 \sum_{j,k=0}^n a_{ijk} Y_j Y_k \right) = 3 \sum_{j,k=0}^n \left( \sum_{i=0}^n a_{ijk} x'_i \right) Y_j Y_k.
\]
The identical vanishing $P(x', Y) \equiv 0$ implies that every coefficient of the quadratic form $P(x', Y)$ must be zero. In particular, $\sum_{i=0}^n a_{ijk} x'_i = 0$ for all $0 \le j, k \le n$. Consequently, the partial derivatives of $f(X)$ vanish at $x'$:
\[
\frac{\partial f}{\partial X_j}(x') = 3 \sum_{i,k=0}^n a_{ijk} x'_i x'_k = 3 \sum_{k=0}^n \left( \sum_{i=0}^n a_{ijk} x'_i \right) x'_k=0.
\] 
Hence, $x'$ is a singular point of $X$, which proves our claim.

In summary, $V^{\ell}_{x'}=V_{x'}\setminus V_{x'}^*$ is a strict closed subvariety of $V_{x'}$, so $V^*_{x'}$ forms an open dense subvariety of $X$ since $X$ is geometrically integral. 

Finally, we prove that $\varphi_{x'}$ restricts to an involution on $V^*_{x'}$. Let $x=\varphi_{x'}(x'')$ with $x''\in V^*_{x'}$. Then $x,x',x''$ are three distinct points lying on a common line $L\not\subset X$. So $x\neq x'$ and the line $L_{x,x'}=L$ is not contained in $X$, i.e., $x\in V_{x'}$. Then $\varphi_{x'}(x)=x''$ since $L\cap X=\{x,x',x''\}$ by B\'{e}zout's theorem. Therefore $x\in V^*_{x'}$ and $\varphi_{x'}\circ \varphi_{x'}=\mathrm{Id}$ on $V^*_{x'}$. 
\end{proof}

On a cubic hypersurface, every plane $\Sigma \not\subset X$ in the ambient space $\mathbb{P}^n$ cuts $X$ along a cubic curve $\Sigma\cap X$. Recall that every line and every conic lies on a plane $\Sigma$. This generates a correspondence between lines and conics on $X$ as follows. If there is a line or a conic lying on both the plane $\Sigma$ and the hypersurface $X$ with $\Sigma \not \subset X$, then the cubic curve $C=\Sigma\cap X$ splits into a line and a conic. Now we explore these correspondences.

Let $C\in M$ be a conic. If the plane $\Sigma_C$ spanned by $C$ is not contained in $X$, we can define its residual line by $L=(\Sigma_C\cap X)- C$. Note that those planes $\Sigma$ contained in $X$ form a closed subvariety of the Grassmannian. We have the following facts:
\begin{defp}\label{residual M1}\cite[3.2.2]{Debarre2016Rational}
Let $M^1$ be the open subscheme of $M$ consisting of those conics $C$ with $\Sigma_C\not\subset X$, i.e.,
\[
M^1=\{ C\in M \mid \Sigma_C \not\subset X \}
\]
Then the residual map $\rho$ sending a conic $C\in M^1$ to its residual line $L=(\Sigma_C\cap X)- C$ defines a morphism of schemes
\[
\rho: M^1\to F.
\]
\end{defp}

We may define the open subscheme $M^\dagger = M^1\cap M^{\mathrm{nd}}$ of $M$ parameterizing those non-degenerate conics $C$ with plane $\Sigma_C$ not contained in $X$.
\begin{defn}\label{residual map and M da}
(i)The open subscheme $M^{\dagger}\subset M^{\mathrm{nd}}$ parameterizing those non-degenerate conics such that $\Sigma_C\not\subset X$ is defined as:
\[
M^{\dagger}=\{[C]\in M^{\mathrm{nd}} \mid \Sigma_C\not\subset X\}.
\]
(ii)The residual map $\rho$ sending a conic $C\in M^\dagger$ to its residual line $L=(\Sigma_C\cap X)- C$ defines a morphism of schemes over $k$
\[
\rho : M^\dagger \to F
\]
as the restriction of the residual morphism $\rho : M^1\to F$ (see Proposition \ref{residual M1}).

(iii)Similarly, we can define $M^{\dagger}_{x}$ and $M^{\dagger}_{x',x''}$ as the closed subschemes of $M^{\dagger}$ parameterizing those conics passing through $x$ and $x',x''$, respectively.
\end{defn}

Fix three distinct closed $k$-points $x,x',x'' \in X(k)$ lying on a line $L'\not\subset X$. 
For each line $L\in F_x$ passing through $x$, we define the plane $\Sigma_L$ which is spanned by lines $L$ and $L'$ meet at point $x$. Since $L'\not \subset X$ and $L\in F_x$, so $L\cap L'=\{x\}$ and $\Sigma_L \not \subset X$. Then $\Sigma_L$ cuts $X$ along a cubic curve and we can define the its residual conic $C\in M$ as $C=(\Sigma_L\cap X)- L$. Since $x,x',x''$ lie on $\Sigma_L\cap X$ and $x',x''\not \in L$ (else $L=L'\not\subset X$), we have $x',x''\in (\Sigma_L\cap X)- L=C$. So $C\in M_{x',x''}$ with $\Sigma_C\not\subset X$.

Above discussion can be summerized as the residual map in following Definition and Proposition.
\begin{defp}\label{residual map Fx to Mx'x''} Fix three distinct closed $k$-points $x,x',x'' \in X(k)$ lying on a line $L'\not\subset X$. The residual map
\[
\phi_x: F_x \rightarrow M_{x',x''}
\]
sending a line $L$ to its residual conic $C=(\Sigma_L\cap X)- L$ defines a morphism of schemes over $k$.
\end{defp}

\begin{proof}
To prove that $\phi_x$ is really a morphism of schemes, we must employ the language of Hilbert functors.

\noindent\textbf{Step 1: The space $M_{x',x''}$ as a Hilbert functor.}

By our definition, we have $M=\Hilb_{2t+1}(X/k)$, and $M_{x',x''}$ is the closed subscheme of $\Hilb_{2t+1}(X/k)$ representing the functor that associates to any $k$-scheme $T$ the set of flat families of conics $C_T \subset X_T$ containing the constant sections $x'_T, x''_T \in X(T)$, i.e.,
\[
M_{x',x''}(T) = \left\{ 
\begin{aligned}
& \text{Subschemes } C_T \subset X \times_k T \text{ which are proper} \\
& \text{and flat over } T, \text{ having Hilbert polynomial } 2t+1, \\
&\text{and containing sections } x'_T, x''_T : T \to X\times_k T
\end{aligned} 
\right\}
\]

To define the morphism $\phi_x: F_x\to M_{x',x''}$ over $k$, we construct a flat family of residual conics over the scheme $F_x$.

\medskip
\noindent\textbf{Step 2: The universal plane bundle $\Sigma_{F_x}$ over $F_x$.}

We fix the ambient space $\mathbb{P}^n=\mathbb{P}(V)$ of $X$. Over $F_x$, let $\mathcal{L}_{F_x} \subset \mathbb{P}^n \times_k F_x$ be the universal family of lines passing through $x$. Equivalently, $\mathcal{L}_{F_x}$ is the projective line bundle $\mathbb{P}(\mathcal{S})$ over $F_x$, where $\mathcal{S}$ is the tautological bundle of rank $2$ over $F_x\subset F$.

Let $L' \not\subset X$ be a fixed line in $\mathbb{P}(V)$ passing through $x$. We have the following subbundles of the trivial rank-$(n+1)$ bundle $V\otimes _k \mathcal{O}_{F_x}$ on $F_x$:
\begin{itemize}
    \item The point $x \in \mathbb{P}(V)$ corresponds to a 1-dimensional subspace $V_x \subset V$. We define the trivial rank-1 subbundle 
    \[
    \mathcal{V}_x := V_x \otimes_k \mathcal{O}_{F_x} \subset V \otimes_k \mathcal{O}_{F_x}.
    \]
    \item The line $L' \subset \mathbb{P}(V)$ corresponds to a 2-dimensional subspace $W' \subset V$ containing $V_x$. We define the trivial rank-2 subbundle 
    \[
    \mathcal{W}' := W' \otimes_k \mathcal{O}_{F_x} \subset V \otimes_k \mathcal{O}_{F_x}.
    \]
    \item Let $\mathcal{S} \subset V \otimes_k \mathcal{O}_{F_x}$ denote the rank-2 tautological subbundle over $F_x \subset F$, whose fiber at a point $[L] \in F_x$ is the 2-dimensional subspace $W_L \subset V$ corresponding to the line $L \subset \mathbb{P}(V)$.
\end{itemize}

Since $x \in L \cap L'$ for all $[L] \in F_x$, the rank-1 subbundle $\mathcal{V}_x$ is contained in both $\mathcal{S}$ and $\mathcal{W}'$. Furthermore, because $L' \not\subset X$ while $L \subset X$, we have $L \neq L'$ and $L \cap L' = \{x\}$ for every $[L] \in F_x$. Consequently, the intersection of subsheaves satisfies:
\[
\mathcal{S} \cap \mathcal{W}' = \mathcal{V}_x \subset V \otimes_k \mathcal{O}_{F_x}.
\]

We define the sheaf $\mathcal{E} \subset V \otimes_k \mathcal{O}_{F_x}$ as the sum of subsheaves:
\[
\mathcal{E} := \mathcal{S} + \mathcal{W}' \subset V \otimes_k \mathcal{O}_{F_x}.
\]
Equivalently, $\mathcal{E}$ is defined via the canonical short exact sequence of $\mathcal{O}_{F_x}$-modules:
\[
0 \longrightarrow \mathcal{V}_x \xrightarrow{\ \mu\ } \mathcal{S} \oplus \mathcal{W}' \xrightarrow{\ \pi\ } \mathcal{E} \longrightarrow 0,
\]
where $\mu(s) = (s, -s)$ and $\pi(s_1, s_2) = s_1 + s_2$.
As a quotient of a rank $4$ vector bundle and a rank-$1$ subbundle, the sheaf $\mathcal{E}$ is a locally free $\mathcal{O}_{F_x}$-module of rank 3 (a vector subbundle of $V \otimes_k \mathcal{O}_{F_x}$). 

Its projectivization 
\[
\Sigma_{F_x} := \mathbb{P}(\mathcal{E}) \subset \mathbb{P}(V) \times_k F_x 
\]
defines the relative plane bundle over $F_x$, where each fiber $\Sigma_L = \mathbb{P}(\mathcal{E}(s)) \cong \mathbb{P}_{k(s)}^2$ at $s=[L]\in F_x$ is the unique projective plane containing $L$ and $L'$.

\medskip
\noindent\textbf{Step 3: The family of intersection curves $\mathcal{X}_\Sigma$ over $F_x$.}

We define the family of intersection curves $\mathcal{X}_\Sigma$ over $F_x$ as the scheme-theoretic intersection of $\Sigma_{F_x}$ with $X \times_k F_x$ inside $\mathbb{P}^n \times_k F_x$:
\[
\mathcal{X}_\Sigma := \Sigma_{F_x} \cap (X \times_k F_x) \subset \Sigma_{F_x}.
\]
By definition of the Fano scheme $F_x$ and the universal family $\mathcal{L}_{F_x}$, we have $\mathcal{L}_{F_x}\subset X \times_k F_x$ and $\mathcal{L}_{F_x}\subset \Sigma_{F_x}$; therefore, $\mathcal{L}_{F_x}\subset \mathcal{X}_\Sigma$.

Let $X\subset \mathbb{P}^n$ be defined by $f\in H^0(\mathbb{P}^n,\mathcal{O}(3))$. Then we define the pullback $f_{F_x}\in H^0(\mathbb{P}^n\times_k F_x, \mathcal{O}(3)\boxtimes \mathcal{O}_{F_x})$ along the projection and then restrict $f_{F_x}$ to
\[
s_{\mathcal{X}}\in H^{0}(\Sigma_{F_x}, \mathcal{O}_{\Sigma_{F_x}}(3))
\]
along the closed immersion $\Sigma_{F_x}\hookrightarrow \mathbb{P}^n \times_k F_x$, where $\mathcal{O}_{\Sigma_{F_x}}(3)$ is the relative Serre twisting sheaf on the projective bundle $\Sigma_{F_x}=\mathbb{P}(\mathcal{E})$ over $F_x$ with rank-3 subbundle $\mathcal{E} \subset V \otimes_k \mathcal{O}_{F_x}$.

Then, the closed subscheme $\mathcal{X}_\Sigma \subset \Sigma_{F_x}$ can also be defined as the zero locus $V(s_{\mathcal{X}})$. 

For each $s=[L]\in F_x$, the fiber is $\Sigma_L\cong \mathbb{P}^2_{k(s)}$ and the fiber $\mathcal{X}_\Sigma \cap \Sigma_L$ is the zero locus of the cubic homogeneous polynomial $s_{\mathcal{X}}|_{\Sigma_{L}}\in H^0( \Sigma_L, \mathcal{O}(3))$ in $\Sigma_L$. Since $L'\not \subset X$ is contained in $\Sigma_L$, the restriction of $s_{\mathcal{X}}$ is not zero. Thus, each fiber of $\pi: \mathcal{X}_\Sigma \to F_x$ is a plane cubic curve.

\medskip
\noindent\textbf{Step 4: Relative effective Cartier divisors.}

Recall that a relative effective Cartier divisor on a scheme $Y/T$ is an effective Cartier divisor $D\subset Y$ such that $D$ is $T$-flat (see \cite[Definition 3.3]{Kleiman2005PicardArxiv}).

The universal family $\mathcal{L}_{F_x}$ is the projective line bundle $\mathbb{P}(\mathcal{S})$ over $F_x$, where $\mathcal{S}$ is a rank-2 subbundle of the rank-3 vector bundle $\mathcal{E}$. Thus, the closed subscheme $\mathcal{L}_{F_x}\subset \Sigma_{F_x}$ is an effective Cartier divisor which is flat over $F_x$ since it is a bundle. Thus, $\mathcal{L}_{F_x}$ is a relative effective Cartier divisor on $\Sigma_{F_x}/F_x$.

We described $\mathcal{X}_{\Sigma}\subset \Sigma_{F_x}$ as the zero locus of the section $s_{\mathcal{X}}$ of a line bundle in Step 3. So $\mathcal{X}_{\Sigma}\subset \Sigma_{F_x}$ is a Cartier divisor.

For each $y\in \Sigma_{F_x}$ and $\pi(y)=s$ with $s=[L]\in F_x$, we have the fiber $\Sigma_L\cong \mathbb{P}^2_{k(s)}$. The restriction of $s_{\mathcal{X}}$ corresponds to a degree 3 homogeneous polynomial $f \in H^0(\mathbb{P}^2_{k(s)}, \mathcal{O}_{\mathbb{P}^2}(3))$ which is non-zero since $L'\not\subset X$ (see Step 3). In the local ring $\mathcal{O}_{\Sigma_L, y}$, the non-zero element $f$ (along a trivialization of $\mathcal{O}_{\Sigma_L}(3)|_{U_y} \cong \mathcal{O}_{U_y}$ for some open neighborhood $U_y\subset \Sigma_L$ of $y$) is a regular element since $\Sigma_L\cong \mathbb{P}^2_{k(s)}$ is integral. By \cite[Lemma 3.4]{Kleiman2005PicardArxiv}, we know $\mathcal{X}_\Sigma$ is a relative effective Cartier divisor on $\Sigma_{F_x}/F_x$.

\medskip
\noindent\textbf{Step 5: The family of residual conics $\mathcal{C}_{F_x}$ over $F_x$.}

By definition of $\mathcal{X}_\Sigma$ and $\mathcal{L}_{F_x}$, we know the closed subscheme $\mathcal{L}_{F_x}\subset \Sigma_{F_x}$ is contained in the closed subscheme $\mathcal{X}_\Sigma\subset \Sigma_{F_x}$ (see Step 3). 

In Step 4, we proved that both $\mathcal{L}_{F_x}$ and $\mathcal{X}_{\Sigma}$ are relative effective Cartier divisors on the smooth family $\Sigma_{F_x}/ F_x$. So we can define the family of residual conics $\mathcal{C}_{F_x}$ as the difference of relative effective Cartier divisors $\mathcal{L}_{F_x}\subset \mathcal{X}_\Sigma$ by \cite[Tag 056P Lemma 31.19.4]{stacks-056P}:
\[
[\mathcal{C}_{F_x}] = [\mathcal{X}_\Sigma] - [\mathcal{L}_{F_x}] \in \mathrm{Div}(\Sigma_{F_x} / F_x).
\]

Consequently, $\mathcal{C}_{F_x} \subset \Sigma_{F_x} \subset \mathbb{P}^n \times_k F_x$ is a relative effective Cartier divisor over $F_x$ which is a flat family over $F_x$. 

Now the Hilbert polynomial of the proper flat family $\mathcal{C}_{F_x}\to F_x$ can be computed at each fiber over $F_x$ by \cite[I.1.2, Definition and Proposition]{Kollar1996}.

On each $s=[L]\in F_x$, the fiber of $\Sigma_{F_x}\subset \mathbb{P}^n_k\times_k F_x$ is $\Sigma_L\subset \mathbb{P}^n_{k(s)}$ with $\Sigma_{L}\cong \mathbb{P}^2_{k(s)}$ spanned by $L$ and $L'$. The fiber of $\mathcal{X}_\Sigma$ is $X \cap \Sigma_L$, where $X\cap \Sigma_L$ should be read as the zero locus of a non-zero cubic homogeneous polynomial $s_{\mathcal{X}}|_{\Sigma_L}$ on $\Sigma_{L}$ by Step 3. The fiber of $\mathcal{L}_{F_x}$ is the line $L\subset \Sigma_L$ contained in $X\cap \Sigma_L$. By definition of $\mathcal{C}_{F_x}$ above, the fiber of $\mathcal{C}_{F_x}$ at $L$ is the difference of Cartier divisors $X\cap \Sigma_L$ and $L$ in $\Sigma_L$. Now $X\cap \Sigma_L$ is defined by a cubic homogeneous polynomial in $\Sigma_L$, and $L\subset \Sigma_L$ is defined by a linear form. So the fiber $C_{L}$ of $\mathcal{C}_{F_x}$ at $s=[L]\in F_x$ is defined by a quadratic homogeneous polynomial in the projective plane $\Sigma_L \cong \mathbb{P}^2_{k(s)}$, which has Hilbert polynomial $2t+1$.

The discussion above demonstrates that the closed subscheme $\mathcal{C}_{F_x} \subset \Sigma_{F_x} \subset \mathbb{P}^n \times_k F_x$ is a proper flat family over $F_x$ with Hilbert polynomial $2t+1$.

\medskip
\noindent\textbf{Step 6: The containment of $x'$ and $x''$.}

By construction of $\Sigma_{F_x}$ and the assumption, the points $x'$ and $x''$ lie on $L' \subset \Sigma_L$ for every $[L] \in F_x$. Then $x',x''\not \in L$; otherwise, $L=L'\not\subset X$, which is determined by the point $x$ and another point in $L$. Thus, the constant sections $x'_{F_x}, x''_{F_x}: F_x \to X \times_k F_x$ factor through the subscheme $\mathcal{C}_{F_x} \subset X \times_k F_x$.

\medskip
\noindent\textbf{Step 7: The residual morphism $\phi_x$ by universal property.}

The above argument demonstrates that the closed subscheme $\mathcal{C}_{F_x} \subset X \times_k F_x$ is a proper, flat family over $F_x$ of pure Hilbert polynomial $P(t) = 2t + 1$, which contains sections $x'_{F_x}, x''_{F_x}$. This is an element of the set $M_{x',x''}(F_x)$. Thus, it defines the residual morphism over field $k$:
\[
\phi_x: F_x \to M_{x',x''}
\]
\end{proof}
Recall that we have defined the residual morphism from $M^\dagger$ to $F$ in Proposition~\ref{residual map and M da}. Now we restrict it to define the residual morphism $\rho_{x',x''}: M^\dagger_{x',x''} \to F_x$.

\begin{defp}\label{residual map Mx',x'' to Fx}
Fix three distinct closed $k$-points $x,x',x'' \in X(k)$ lying on a line $L'\not\subset X$. The residual map
\[
\rho_{x',x''}: M^\dagger_{x',x''} \to F_x
\]
sending a conic $C\in M^\dagger$ passing through $x',x''$ to the residual line $L=(\Sigma_C \cap X)- C$ defines a morphism of schemes over $k$.
\end{defp}

\begin{proof}
Recall that we have defined the residual morphism $\rho: M^\dagger \to F$ in Definition~\ref{residual map and M da} on the moduli space $M^\dagger$. Thus, $\rho_{x',x''} :M^\dagger_{x',x''}\to F$ is well-defined as the composition of $\rho$ and the closed immersion $i:M^\dagger_{x',x''} \to M^\dagger$.

For a conic $C\in M^\dagger_{x',x''}$ passing through $x',x''$, the residual line $L=\rho(C)$ is contained in the cubic curve $\Sigma_C \cap X$. Recall that the line $L'$ passing through $x,x',x''$ is not contained in $X$, while $C\subset X$. So $L'\not \subset C$ and thus $L'\cap C=\{x',x''\}$ by Bézout's theorem. Consequently, $x\in L'\subset \Sigma_C$ is contained in $(\Sigma_C \cap X)- C=L$, i.e., $L\in F_x$.

This indicates that the residual morphism $\rho_{x',x''}: M_{x',x''}^\dagger \to F$ factors through $F_x$, and thus defines the residual morphism
\[
\rho_{x',x''}: M^\dagger_{x',x''} \to F_x.
\]
\end{proof}

By the construction of the residual morphisms $\phi_x$ and $\rho_{x',x''}$ in Proposition~\ref{residual map Fx to Mx'x''} and Proposition~\ref{residual map Mx',x'' to Fx}, we find that their restrictions are inverse to each other. This induces the following correspondence between $M_{x',x''}$ and $F_x$, which is critical for us.

\begin{defp}\label{birational between F_x and M_x',x''}
Fix three distinct closed $k$-points $x,x',x'' \in X(k)$ lying on a line $L'\not\subset X$. We define the open subscheme $F_x^\dagger= F_x \times_{M_{x',x''}}M^\dagger_{x',x''}$ of $F_x$ as the pullback of $M^\dagger_{x',x''}$ along the residual morphism $\phi_x$. Then the residual morphism $\rho_{x',x''}: M^\dagger_{x',x''}\to F_x$ factors through $F_x^\dagger$. The residual morphisms
\[
\phi_x: F_x^\dagger \rightarrow M^\dagger_{x',x''},\quad \rho_{x',x''}:M^\dagger_{x',x''}\to F_x^\dagger
\]
are isomorphisms over $k$ inverse to each other.

Furthermore, suppose that $F_x$ is geometrically integral, smooth of dimension $n-4$, and that the space $\Delta_{x',x''}$ parameterizing degenerate conics passing through $x',x''$ is of dimension at most $n-5$. Then $M^{\dagger}_{x',x''}\cong F_x^\dagger$ are nonempty, thus being geometrically integral of dimension $n-4$ over $k$.
\end{defp}

\begin{proof}
Let $C\in M^\dagger_{x',x''}$ be a conic, and $L=\rho_{x',x''}(C)$ be the residual line. We have $\Sigma_{C}\cap X = C\cup L$; then the plane $\Sigma_L$ spanned by $L,L'$ is equal to $\Sigma_{C}$. This indicates that the residual conic $\phi_x(L)$ should be $C$. As a consequence, $\phi_x\circ \rho_{x',x''}= \mathrm{Id}$ on $M^{\dagger}_{x',x''}$. (In view of the functor, we can recover these arguments over $\Sigma_T/T$ for any $k$-scheme $T$, and the residual morphisms as the difference of relative Cartier divisors over $\Sigma_T /T$; see the proof of Proposition~\ref{residual map Fx to Mx'x''}). This also indicates that $\rho_{x',x''}$ factors through $F_x^\dagger= F_x \times_{M_{x',x''}}M^\dagger_{x',x''}$.

Let $L\in F_x^\dagger$ be a line, and $C=\phi_x(L)\in M^\da_{x',x''}$ be the residual conic. We have $\Sigma_{L}\cap X = C\cup L$; then the plane $\Sigma_C$ spanned by $C$ is equal to $\Sigma_{L}$. This indicates that the residual line $\rho_{x',x''}(C)$ should be $L$. As a consequence, $\rho_{x',x''}\circ \phi_x = \mathrm{Id}$ on $F_x^\dagger$. This completes the proof that $\phi_x$ and $\rho_{x',x''}$ are inverse to each other.

Now we assume that $F_x$ is geometrically integral of dimension $n-4$ and that $\dim\Delta_{x',x''}\le n-5$. We claim that the residual morphism $\phi_x: F_x \to M_{x',x''}$ is quasi-finite. Note that if $L\in \phi_x^{-1}(C)$, then the plane $\Sigma$ spanned by lines $L$ and $L'$ is also determined by curves $L'\not\subset X$ and $C\subset X$. So $L$ is a line contained in the cubic curve $\Sigma \cap X$, while the cubic curve contains at most three lines. This proves our claim. Now the morphism $\phi_x: F_x\to M_{x',x''}$ is quasi-finite, so we have $\dim M_{x',x''}\ge \dim F_x$. Since $\dim F_x=n-4$ and $\dim \Delta_{x',x''}\le n-5$, we have $M^{\mathrm{nd}}_{x',x''}=M_{x',x''}\setminus \Delta_{x',x''}\neq \emptyset$. Recall that for a conic $C\in M_{x',x''}^{\mathrm{nd}}$, $C\in M^\dagger_{x',x''}$ if and only if the plane $\Sigma_C$ spanned by $C$ is not contained in $X$ by Definition~\ref{residual map and M da}, which is an empty condition since the line $L'$ passing through $x',x''$ is not contained in $X$ by assumption. So $M^\dagger_{x',x''}=M^{\mathrm{nd}}_{x',x''}\neq \emptyset$.

Therefore, $F_x^\dagger \cong M^{\dagger}_{x',x''}$ is nonempty, thus being geometrically integral of dimension $n-4$ over $k$.
\end{proof}
\begin{prop}\label{dim M^da x}
For any closed point $x\in X$, the dimension of $M^\da_x$ satisfies 
\[
\dim M^\da_x\le \dim F_x+n-2.
\]
\end{prop}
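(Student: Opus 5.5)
We bound $\dim M^\da_x$ by fibering it over the possible residual lines. Consider the residual morphism $\rho: M^\da \to F$ of Definition~\ref{residual map and M da}, restricted to the closed subscheme $M^\da_x$. A conic $C\in M^\da_x$ lies in a plane $\Sigma_C\not\subset X$ with $\Sigma_C\cap X = C\cup L$, where $L=\rho(C)$. We therefore bound the dimension of the image $\rho(M^\da_x)\subset F$ and the dimension of the fibres of $\rho$ over that image separately.

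\textbf{Fibres.} Fix $L\in F$. A conic $C$ with $\rho(C)=L$ is determined by its plane $\Sigma_C$: it is the residual conic $(\Sigma_C\cap X)-L$, with $\Sigma_C\supset L$. Planes containing $L$ are parameterized by $\P(V/W_L)\cong\P^{n-2}$, so each fibre of $\rho$ has dimension at most $n-2$. If we additionally require $x\in C$, the fibre is cut down further.

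\textbf{Image, case $x\in L$.} The line $L$ lies in $F_x$, and the fibre has dimension at most $n-2$. This stratum therefore has dimension at most $\dim F_x+n-2$, which is exactly the bound claimed.

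\textbf{Image, case $x\notin L$.} Then $\Sigma_C$ must be the plane spanned by $L$ and $x$, so the fibre is at most a single point and $C$ is determined by $L$. Moreover $L$ must lie in $T_xX\cap X$. Indeed, $x\in C\subset \Sigma_C\cap X$, and we claim this forces a tangency condition. More simply, we can bound this stratum by the lines $L$ for which the plane $\langle L,x\rangle$ meets $X$ in a cubic containing $L$. This is the main obstacle: a priori this set is $\{L\in F\}$ subject to a single condition, of dimension up to $\dim F = 2n-6$. We need $2n-6\le \dim F_x+n-2$, i.e.\ $\dim F_x\ge n-4$.

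That inequality holds for every closed point $x$. By Proposition~\ref{F_x, dimension, smooth, intersection}(i), $F_x$ is cut out by two equations in $\P^{n-2}$, so every component has dimension $\ge n-4$. It is also nonempty, since it is an intersection of a quadric and a cubic over an algebraically closed field with $n-2\ge 2$. Hence the $x\notin L$ stratum has dimension at most $\dim F\le \dim F_x+n-2$.

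Combining the two strata gives $\dim M^\da_x\le \dim F_x+n-2$. The dimension bookkeeping is carried out over $\bar k$, using that fibre dimension bounds the source dimension by (image dimension) $+$ (maximal fibre dimension).
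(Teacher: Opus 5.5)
Your proposal is correct and follows essentially the same route as the paper. You stratify $M^\da_x$ by whether the residual line $\rho(C)$ passes through $x$, bound the fibres by $n-2$ over $F_x$ and by a single point over $F\setminus F_x$, and absorb the resulting $2n-6$ using $\dim F_x\ge n-4$; you justify that last bound more carefully than the paper, since nonemptiness and the dimension count for the two equations of Proposition~\ref{F_x, dimension, smooth, intersection}(i) hold at every closed point, not just general ones. The one blemish is your stray assertion that $L\subset T_xX\cap X$ when $x\notin L$: this is false in general, because it would force $\langle L,x\rangle\subset T_xX$, i.e.\ the plane cubic would have to be singular at $x$. You never use it, so simply delete it.
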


\begin{proof}
\textit{Step 1:}
Consider the residual morphism
\[
\rho_x: M^\da_x \rightarrow F.
\]
which is the restriction of the residual morphism $\rho: M^\da \to F$ (see Definition~\ref{residual map and M da}). We may compute $\dim M^\da_x$ by estimating the dimension of fibers of $\rho_x$.

\medskip
\textit{Step 2:}
Note that for $L\not \in F_x$, the point $x$ and the line $L$ spans a plane, we will see that the fiber of $L$ is finite in this case. So we estimate the dimension of fibers of $\rho_x$ for $L\in F_x$ and $L\notin F_x$, respectively. Let $U_F=F\setminus F_x$, $U_M=\rho_x^{-1}(U_F)$, and $Z_M=\rho_x^{-1}(F_x)$. Then $M^\da_x= U_M\cup Z_M$.

\medskip
\textit{Step 3:}
For the morphism $\rho_{x,U}: U_M \to U_F$, the fibers are $\rho_x^{-1}(L)$ for $L\not \in F_x$. If $C\in \rho_x^{-1}(L)$, then $C$ is contained in the plane $\Sigma(L,x)$, and thus is contained in the cubic curve $\Sigma(L,x)\cap X$. Thus the fibers of $\rho_{x,U}$ are finite, and we conclude that $\dim U_M\le \dim U_F=\dim F=2n-6$ by Theorem~\ref{F smooth and dimension}.

\medskip
\textit{Step 4:}
For the morphism $\rho_{x,Z}:Z_M \to F_x$, its fibers are $\rho_x^{-1}(L)$ for $L\in F_x$. Now we fix $L\in F_x$. Any conic $C \in \rho_x^{-1}(L)$ lies in a plane $\Sigma_C \subset \mathbb{P}^n$ containing $L$. Recall that the space of planes in $\mathbb{P}^n$ containing a fixed line $L \cong \mathbb{P}^1$ is isomorphic to $\operatorname{Gr}(1, n-1) \cong \mathbb{P}^{n-2}$, which has dimension $n-2$.

For a fixed plane $\Sigma \supset L$ with $\Sigma \not\subset X$, the intersection $\Sigma \cap X$ is a plane cubic containing $L$. The residual curve $C = (\Sigma \cap X) - L$ is uniquely determined as a conic on $\Sigma$.

Thus, for each $L \in F_x$, the fiber $\rho_x^{-1}(L)$ is parametrized by planes $\Sigma \supset L$, so $\dim \rho_x^{-1}(L) \le n - 2$. This indicates that the fibers of the morphism $\rho_{x,Z}:Z_M\to F_x$ are of dimension at most $n-2$. Therefore, we obtain
\[
\dim Z_M \le \dim F_x + (n - 2).
\]

\medskip
\textit{Step 5:}
In summary, we have $M^\da_x= U_M\cup Z_M$, $\dim U_M \le 2n-6$, and $\dim Z_M\le \dim F_x+ (n-2)$. Recall that $\dim F_x \ge n-4$ by Proposition~\ref{F_x, dimension, smooth, intersection}; we have
\[
\dim M^\da_x \le \max (2n-6, \dim F_x+n-2)=\dim F_x+n-2.
\]
This completes the proof.
\end{proof}

Let $Z$ be a closed subvariety of $X$ of codimension $2$. We may define $M^\dagger_{x',x''}(Z)$ to be the closed subscheme of $M^\dagger_{x',x''}$ parameterizing conics meeting $Z$ as follows.

Let $\mathcal{C^\da}_{x',x''} \subset M^{\dagger}_{x',x''} \times X$ be the universal family of conics with projections $p_M: \mathcal{C}^\da_{x',x''} \to M^{\dagger}_{x',x''}$ and $p_X: \mathcal{C}^\da_{x',x''} \to X$. We define the incidence scheme $\mathcal{W}_{x',x''}(Z)$ as the pullback of $Z$ along $p_X$:
\[
\mathcal{W}_{x',x''}(Z) := \mathcal{C}^\da_{x',x''} \times_X Z.
\]
The projection $\pi_M: \mathcal{W}_{x',x''}(Z) \to M^{\dagger}_{x',x''}$ is proper, as $p_M$ is proper and $Z \hookrightarrow X$ is a closed immersion.

\begin{defn}
The closed subscheme $M^{\dagger}_{x',x''}(Z)$ of $M^\dagger_{x',x''}$ parameterizing conics meeting $Z$ is defined as the scheme-theoretic image of $\pi_M$:
\[
M^{\dagger}_{x',x''}(Z) := \text{Im}(\pi_M) \subset M^{\dagger}_{x',x''}.
\]
\end{defn}

We can estimate the dimension of $M^{\dagger}_{x',x''}(Z)$ for general $x',x''$ using the following proposition. This will help us find conics in $M^\dagger_{x',x''}$ avoiding $Z$ in the proof of the main theorem.
\begin{prop}\label{Conics avoid Z}
Let $X$ be a smooth geometrically integral cubic hypersurface in $\P^{n}$ over a number field $k$ with $n\ge6$, and let $Z$ be a fixed codimension-$2$ closed subvariety. There exists an open dense subvariety $V\subset X\times X$ such that for all $(x',x'')\in V$, the space $M^\da_{x',x''}(Z)$ parameterizing conics $C\in M^\da_{x',x''}$ meeting $Z$ satisfies
\[
\dim M^\da_{x',x''}(Z)\le n-5.
\]
\end{prop}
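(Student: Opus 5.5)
We reduce the statement to a dimension count on a global incidence variety and then pass to general fibres over $X\times X$. Consider the universal non-degenerate conic family $\mathcal{C}^\dagger\subset M^\dagger\times X$. Let $\mathcal{W}(Z)\subset M^\dagger$ be the closed subscheme of conics in $M^\dagger$ meeting $Z$, i.e. the image of $\mathcal{C}^\dagger\times_X Z$. We then form the incidence
\[
\mathcal{I}=\{(C,x',x'')\in \mathcal{W}(Z)\times X\times X \mid x',x''\in C\},
\]
whose fibre over $(x',x'')$ under the projection $\mathcal{I}\to X\times X$ is exactly $M^\dagger_{x',x''}(Z)$. Since each conic is a curve, the fibres of $\mathcal{I}\to\mathcal{W}(Z)$ are $C\times C$, so $\dim\mathcal{I}=\dim\mathcal{W}(Z)+2$. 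Generic flatness (or the fibre dimension theorem) applied to $\mathcal{I}\to X\times X$ then gives an open dense $V\subset X\times X$ over which either the fibre is empty or
\[
\dim M^\dagger_{x',x''}(Z)=\dim\mathcal{W}(Z)+2-2(n-1).
\]
So it suffices to show $\dim \mathcal{W}(Z)\le 3n-9$.

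To bound $\dim\mathcal{W}(Z)$, we use the incidence $\mathcal{J}=\{(C,z): z\in C\cap Z\}\subset M^\dagger\times Z$, which surjects onto $\mathcal{W}(Z)$. The fibre of $\mathcal{J}\to Z$ over $z$ is contained in $M^\dagger_z$, so Proposition~\ref{dim M^da x} gives
\[
\dim\mathcal{J}\le \max_z\bigl(\dim Z_s+\dim F_z+n-2\bigr),
\]
after stratifying $Z$ by the dimension of $F_z$. Here $\dim Z=n-3$.

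For $z$ off the bad locus $B$ of Proposition~\ref{bad locus}, we have $\dim F_z=n-4$, giving $(n-3)+(n-4)+(n-2)=3n-9$. For $z\in B$, we have $\dim(Z\cap B)\le 1$, and $\dim F_z\le \dim F_{2,z}$ with $\dim F_{2,z}\le \dim\L_2=n-2$. Note that $\dim F_z\ge n-3$ forces the fibre of $q_2$ over $z$ to be large; the proof of Proposition~\ref{bad locus} shows that over a curve in $B$ the fibres have dimension $\le n-3$, and over a point $\le n-2$. This gives contributions $1+(n-3)+(n-2)=2n-4$ and $0+(n-2)+(n-2)=2n-4$, both at most $3n-9$ since $n\ge6$ (this is where $n\ge 6$, rather than $n\ge5$, is used). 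Since $\mathcal{J}\to\mathcal{W}(Z)$ is surjective, $\dim\mathcal{W}(Z)\le\dim\mathcal{J}\le 3n-9$. Plugging into the fibre count gives
\[
\dim M^\dagger_{x',x''}(Z)\le 3n-9+2-2n+2=n-5
\]
for $(x',x'')\in V$.

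The main obstacle is making the global-to-general-fibre step rigorous. Each irreducible component of $\mathcal{I}$ must be treated separately: non-dominant components contribute nothing over a suitable open set, and dominant ones have general fibre dimension equal to their dimension minus $2n-2$. We also need to check that taking scheme-theoretic images and the definition of $M^\dagger_{x',x''}(Z)$ via $\mathcal{C}^\dagger_{x',x''}\times_X Z$ agree set-theoretically with the fibre of $\mathcal{I}$, which suffices because only dimensions matter. A secondary check is the bad-locus estimate: we should confirm $\dim F_z\le n-2$ for every $z$, which holds since $F_z\subset\P^{n-2}$ is cut out by a nonzero quadric or else is contained in the quadric's complement condition. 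If that fails, the crude bound $\dim F_z\le n-2$ together with $\dim B\le 1$ still suffices for $n\ge6$.
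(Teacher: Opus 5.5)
Your proposal is correct and follows essentially the same route as the paper: an incidence variety over $Z$ bounded fibrewise via $\dim M^\dagger_z\le \dim F_z+n-2$, stratified by the bad locus $B$ (using $\dim(B\cap Z)\le 1$ and the crude bound $\dim F_z\le n-2$ there), followed by a generic-fibre dimension count over $X\times X$. The only differences are cosmetic, plus one small slip. Because you take the image $\mathcal{W}(Z)\subset M^\dagger$ before adding the marked points, your general fibre is literally $M^\dagger_{x',x''}(Z)$ rather than a space of pairs $(C,z)$ surjecting onto it. The slip is that your refined contributions $2n-4\le 3n-9$ need only $n\ge 5$; it is the crude fallback $1+(n-2)+(n-2)=2n-3\le 3n-9$ (equivalently the paper's $2n-1\le 3n-7$) that actually uses $n\ge 6$.
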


\begin{proof}
\textit{Step 1:}
We construct the incidence scheme
\[
\mathcal{W}=\{(C,z,x_1,x_2)\in M^\da \times Z \times X\times X \mid z\in Z\cap C; x_1,x_2\in C \}.
\]

Consider the projection $\pi_Z:\mathcal{W}\to Z$ given by projecting onto $z$. We compute the dimension of $\mathcal{W}$ via the morphism $\pi_Z$.

For each point $z\in Z$, the fiber of $\pi_Z$ is 
\[
\pi_Z^{-1}(z)=\{(C,x_1,x_2) \mid C\in M^\da_z ; \;x_1,x_2\in C\},
\] 
which has dimension $\dim M^\da_z+2$.

\medskip
\textit{Step 2:}
Recall that $\dim M^\da_z\le \dim F_z+n-2$ by Proposition~\ref{dim M^da x}. We can also control the dimension $\dim F_x \le n-2$ by Proposition~\ref{F_x, dimension, smooth, intersection} and the dimension $\dim B\le 1$, where $B\subset X$ is the bad locus of points $x\in X $ with $\dim F_x >n-4$ by Definition and Proposition~\ref{bad locus}. 

Let $B_Z=B\cap Z$ and $V_Z=Z\setminus B_Z$. Then $\dim B_Z\le 1$. We can compute the dimension of fibers in $\pi_Z:\mathcal{W}\to Z$ along the decomposition $Z=B_Z\cup V_Z$. Let $\pi_{Z,B}: \pi_{Z}^{-1}(B_Z)\to B_Z$ and $\pi_{Z,V}: \pi_{Z}^{-1}(V_Z)\to V_Z$ be the pullbacks of $\pi_Z$ along $B_Z$ and $V_Z$, respectively.

\medskip
\textit{Step 3:}
For $z\in B_Z$, the fiber $\pi_{Z,B}^{-1}(z)=\pi_{Z}^{-1}(z)$ has dimension $\dim M^\da_z+2$. Now $\dim M^\da_z\le \dim F_z +n-2 \le n-2+n-2=2n-4$ and $\dim B_Z\le 1$ by Step 2. Then analyzing the morphism $\pi_{Z,B}: \pi_{Z}^{-1}(B_Z)\to B_Z$ yields
\[
\dim \pi_{Z}^{-1}(B_Z) \le \dim M^\da_z+2+\dim B_Z\le 2n-1.
\]

\medskip
\textit{Step 4:}
For $z\in V_Z$, the fiber $\pi_{Z,V}^{-1}(z)=\pi_{Z}^{-1}(z)$ has dimension $\dim M^\da_z+2$. By the definition of $B$ and $V_Z$ (see Definition~\ref{bad locus}), we have $\dim F_z =n-4$ and then $\dim M^\da_z \le \dim F_z+n-2 \le 2n-6$ by Step 2. Since $\dim V_Z \le \dim Z\le n-3$, the morphism $\pi_{Z,V}: \pi_{Z}^{-1}(V_Z)\to V_Z$ yields
\[
\dim \pi_{Z}^{-1}(V_Z) \le \dim M_z^\da+2+\dim V_Z\le 3n-7.
\]

\medskip
\textit{Step 5:}
Gathering the results above, we have $\mathcal{W}=\pi_{Z}^{-1}(V_Z)\cup \pi_{Z}^{-1}(B_Z)$, $\dim \pi_{Z}^{-1}(B_Z) \le 2n-1$, and $\dim \pi_{Z}^{-1}(V_Z) \le \dim M^\da_z+2+\dim V_Z\le 3n-7$. Recall that $n\ge 6$ by assumption, so
\[
\dim \W \le \max(\dim \pi_{Z}^{-1}(V_Z), \dim\pi_{Z}^{-1}(B_Z)) \le 3n-7.
\]

\medskip
\textit{Step 6:}
Now consider the map $p_X:\mathcal{W}\to X\times X$ given by projecting onto $(x_1,x_2)$. 
If $p_X$ is not dominant, a general fiber $p_X^{-1}(x',x'')$ is empty. If $p_X$ is dominant, a general fiber 
\[
p_X^{-1}(x',x'')=\{(C,z)\in M^\da_{x',x''}\times Z \mid z\in C\cap Z \}
\]
has dimension at most $\dim \W -2\dim X \le n-5$ by generic flatness. Now $M^\da_{x',x''}(Z)=p_M(p_X^{-1}(x',x''))$ by definition, where $p_M$ is the projection $p_M:\mathcal{W}\to M^\da$. Thus,
\[
\dim M^\da_{x',x''}(Z)\le \dim p_X^{-1}(x',x'')=n-5.
\] 
This completes the proof.
\end{proof}

\section{Arithmetic Purity}

In this section, we state our main theorem \ref{cubic AP}. Then we introduce our Connecting Lemma \ref{connecting lemma cubic}, which is a basic tool of our approach. Finally, we prove our main theorem using previous results on the Fano variety $F$ of lines and the moduli space $M$ of conics on a cubic hypersurface $X$.

Our main theorem is:
\begin{thm}\label{cubic AP}
Let $X$ be a smooth geometrically integral cubic hypersurface in $\P^{n}$ over a number field $k$. Then $X$ satisfies arithmetic purity of strong approximation provided that $n\ge 323$. If $k=\Q$, $X$ satisfies arithmetic purity of strong approximation provided that $n\ge30$.
\end{thm}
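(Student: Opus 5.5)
We fix a codimension-$2$ closed subvariety $Z\subset X$ and set $U=X\setminus Z$. We must show that $U$ satisfies strong approximation off $\emptyset$. The plan follows the outline in the introduction: verify the two hypotheses of the Connecting Lemma~\ref{connecting lemma cubic} for $U$, namely weak approximation and connectability of general pairs of rational points by $k$-rational curves $\P^1_k\to U$, and then invoke the lemma.

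\textbf{Weak approximation.} Since $n\ge 30\ge 18$, Theorem~\ref{cubic facts}(i) gives $X(k)\neq\emptyset$ and weak approximation for $X$. Weak approximation passes to the dense open subset $U$: $U(k_v)$ is open in $X(k_v)$, and for $X$ smooth it is also dense there by the implicit function theorem. So $U(k)$ is dense in $\prod_{v\in S}U(k_v)$ for every finite $S$.

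\textbf{Connecting curves.} Let $x',x''\in U(k)$ be general, with $(x',x'')$ in the intersection of the open sets from Proposition~\ref{dim-Deltax,x'}(ii) and Proposition~\ref{Conics avoid Z}. Also require $x'\neq x''$ and $x''\in V^*_{x'}$ (Proposition~\ref{involution varphi_x'}), so that the line $L'$ through them is not contained in $X$. Let $x=\varphi_{x'}(x'')\in X(k)$ be the third intersection point. We need $x$ to lie in the open set $V$ of Theorem~\ref{cubic facts}(ii), (ii') or (iii), so that $F_x$ is smooth, geometrically integral of dimension $n-4$, and has Zariski dense $k$-points.

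\emph{Main obstacle.} In case (ii) there is an additional real condition $x\in\prod_{v \;\mathrm{real}}W'_v$, and this must be arranged compatibly with the genericity of $(x',x'')$. I would choose $x',x''$ by weak approximation: fix local points $x'_v,x''_v$ at the real places such that $\varphi_{x'_v}(x''_v)\in W'_v$. This is possible because $\varphi$ is an involution and $W'_v$ is a nonempty open set: start from $w\in W'_v$ and any $x'_v$, and take $x''_v=\varphi_{x'_v}(w)$. By continuity, a small neighborhood of $(x'_v,x''_v)$ still has $\varphi$ landing in $W'_v$, and the Zariski-generic conditions cut out a dense open set. The required pairs are therefore dense, which is what the connecting lemma should need.

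With $x$ chosen this way, Proposition~\ref{birational between F_x and M_x',x''} shows that $M^\dagger_{x',x''}\cong F_x^\dagger$ is a nonempty open subset of $F_x$ of dimension $n-4$, so its $k$-points are Zariski dense. By Proposition~\ref{Conics avoid Z}, $\dim M^\dagger_{x',x''}(Z)\le n-5$. Hence there is a $k$-point $C\in M^\dagger_{x',x''}\setminus M^\dagger_{x',x''}(Z)$: a non-degenerate conic defined over $k$, passing through $x',x''$, and disjoint from $Z$. Because $C$ contains the rational point $x'$, we get $C\cong\P^1_k$, and so $C$ gives a morphism $\P^1_k\to U$ through $x'$ and $x''$.

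\textbf{Conclusion.} Feed both hypotheses into Lemma~\ref{connecting lemma cubic} to obtain strong approximation for $U$. The case $k=\Q$ runs identically, using Theorem~\ref{cubic facts}(iii) to replace the bound $n\ge323$ by $n\ge30$. I expect the real-place compatibility, together with matching the precise genericity form demanded by the connecting lemma, to be the delicate point. The algebro-geometric inputs are already in place from the earlier results.
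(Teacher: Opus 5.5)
Your proposal is correct and follows essentially the same route as the paper. Both use weak approximation for $U$, the Connecting Lemma~\ref{connecting lemma cubic}, the residual involution $\varphi_{x'}$ to turn the real-place open sets $W'_v$ and the condition $x\in V_1$ into conditions on $x''$, and then Propositions~\ref{birational between F_x and M_x',x''} and \ref{Conics avoid Z} to produce a $k$-conic through $x',x''$ avoiding $Z$. One point to tighten: the lemma needs, for each fixed $x'\in V(k)$, the open set $\varphi_{x'}(W'_v\cap V^*_{x',v})$ intersected with the Zariski-open conditions on $x''$, not just density of good pairs (your ``any $x'_v$'' construction with $x'_v=x'$ gives exactly this), and when $k$ has no real place these conditions must be placed at one auxiliary place $v_0$, as the paper does.
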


\begin{rem}
One could expect that the results in \cite{Browningetal2015} should generalize to all number fields $k$. If so, our Theorem~\ref{cubic AP} would have a uniform bound $n\ge30$ for all number fields $k$. However, this generalization requires heavy technical work using the circle method, which is beyond the scope of this article.
\end{rem}

To prove arithmetic purity, we introduce our Connecting Lemma, which is a basic tool of our approach.
\begin{lem}\label{connecting lemma cubic}
Let $U$ be a smooth geometrically integral variety over a global field $k$ satisfying weak approximation. If there exist a finite subset $S^\dagger\subset \Omega_k$ and a Zariski open dense subvariety $V\subset U$ 
such that for every $x'\in V(k)$, there exists a nonempty open subset $\prod_{v\in S^\dagger} W'_{v,x'}\subset \prod_{v \in S^\dagger} U(k_v)$ depending on $x'$, having the property that for each $x''\in V(k)\cap \prod_{v\in S^\dagger} W'_{v,x'}$, there exists a morphism $f:\P^1\to U$ defined over $k$ and $a, b\in \P^1(k)$ such that $f(a)=x'$ and $f(b)=x''$, then $U$ satisfies strong approximation.
\end{lem}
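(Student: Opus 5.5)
The plan is to prove strong approximation off $\emptyset$ directly. I would first reduce it to the following claim: for any finite set $S\subset\Omega_k$ containing all archimedean places and all places of bad reduction for a chosen integral model $\mathcal{U}$ of $U$ over $\O_{k,S}$, and any nonempty open $\prod_{v\in S}W_v\subset\prod_{v\in S}U(k_v)$, there is a point $x\in U(k)\cap\prod_{v\in S}W_v$ with $x\in\mathcal{U}(\O_v)$ for all $v\notin S$. Enlarging $S$ is harmless, so I may take $S\supset S^\dagger$.

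The idea is to move from a weak-approximation point to an integral point along a rational curve, using the fibration method on $\P^1$. First I would fix $x'\in V(k)\cap\prod_{v\in S}W_v$, which exists by weak approximation for $U$ together with density of $V$. Next I would choose $x''\in V(k)\cap\prod_{v\in S^\dagger}W'_{v,x'}\cap\prod_{v\in S\setminus S^\dagger}W_v$, again by weak approximation; this is where the open set depending on $x'$ is used. This gives $f:\P^1\to U$ over $k$ with $f(a)=x'$ and $f(b)=x''$.

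Now I would spread $f$ out to $f:\P^1_{\O_{k,T}}\to\mathcal{U}$ for a finite $T\supset S$. For $v\in T\setminus S$ I need $t\in\P^1(k)$ with $f(t)\in\mathcal{U}(\O_v)$. At such $v$ the points $x',x''$ need not be integral, so I would handle these places by choosing $x''$ more carefully. Since $x''$ ranges over a weak-approximation-dense set, I can additionally require $x''\in\mathcal{U}(\O_v)$ for the finitely many $v$ where $x'$ fails to be integral. These places depend only on $x'$, so this is compatible with choosing $x''$ after $x'$.

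The remaining places are those in $T$ coming from the spreading of $f$, and these depend on $x''$. To deal with them, I would use that the set $\{t\in\P^1(k_v): f(t)\in\mathcal{U}(\O_v)\}$ is open in $\P^1(k_v)$ and nonempty, since it contains $a$ or $b$ wherever $x'$ or $x''$ is integral. I would then apply strong approximation for $\P^1$ minus a point, equivalently $\mathbb{A}^1$, off a place $v_0\in S$, taking $v_0$ archimedean, to find $t\in\P^1(k)$ satisfying three conditions: $t$ is close to $a$ at the places of $S\setminus\{v_0\}$; $t$ lies in the required open sets at the places of $T\setminus S$; and $t$ is integral at all other places outside $T$, where $f$ is a morphism of integral models and so $f(t)$ is automatically integral.

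The main obstacle is the place $v_0$, where $t$ cannot be controlled. I would get around this by applying the argument twice with the roles arranged so that $W_{v_0}$ is approximated by continuity from $a$ at another stage. Alternatively, I would choose the coordinate on $\P^1$ so that the point at infinity avoids $a$, and use the fact that $f(t)$ tends to $f(a)$ as $t\to a$ in every $v$-adic topology simultaneously for $v\in S\setminus\{v_0\}$, while accepting that strong approximation off $v_0$ gives no control at $v_0$. Making this step rigorous is the crux, and it may require choosing $x'$ and $x''$ in a more symmetric way.
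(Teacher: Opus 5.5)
Your setup matches the paper's. You choose $x'$ by weak approximation in the prescribed open sets at the places of $S$. You then choose $x''$ in $\prod_{v\in S^\dagger}W'_{v,x'}$ and integral at the finitely many places outside $S$ where $x'$ fails to be integral. This guarantees that at every place outside $S$, one of $a,b$ lies in $f^{-1}(\mathcal{U}(\O_v))$.

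\textbf{The gap.} The final step, which you yourself call the crux, is not carried out. The route you propose cannot work as stated: strong approximation for $\mathbb{A}^1$ off $v_0$ gives no control of $t$ at $v_0$, so $f(t)$ need not land in $W_{v_0}$. Neither of your suggested remedies closes this; the ``continuity from $a$'' idea still says nothing at $v_0$.

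\textbf{The missing idea.} You never need $t$ to be integral in $\mathbb{A}^1$. The curve $\P^1$ is proper, so $\P^1(\O_v)=\P^1(k_v)$ for every finite $v$ by the valuative criterion. Once $f$ is spread out to a morphism $\P^1_{\O_{k,T}}\to\mathcal{U}\times_{\O_{k,S}}\O_{k,T}$, every $t\in\P^1(k_v)$ with $v\notin T$ automatically satisfies $f(t)\in\mathcal{U}(\O_v)$. A point with large $v$-adic denominator simply reduces to $\infty$ and is still an $\O_v$-point.

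So put $N_v=f^{-1}(W_v)$ for $v\in S$ and $N_v=f^{-1}(\mathcal{U}(\O_v))$ for $v\in T\setminus S$. The target set $\prod_{v\in T}N_v\times\prod_{v\notin T}\P^1(k_v)$ imposes only finitely many nontrivial conditions. Each $N_v$ is open and nonempty, since it contains $a$ or $b$. Plain weak approximation for $\P^1$, i.e.\ the approximation theorem for $k$, then produces $t\in\P^1(k)$ in this set, with every place of $T$ controlled, archimedean ones included.

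This is exactly how the paper concludes. With this observation the problematic place $v_0$ never arises, and no more symmetric choice of $x',x''$ is needed.
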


\begin{proof}
Let $\U$ be an integral model of $U$ over $\O_T$ for some finite set of primes $T$ of $k$ with $T\cup \infty_k \supset S^\dagger$, and let
\[
\prod_{v\in T\cup \infty_k} W_v\times\prod_{v\notin T} \U(\O_v)
\]
be a non-empty open subset of $U({\A}_k)$. Since $V$ is a Zariski open dense subset of $U$, there exists $x'\in V(k)$ such that $x'\in W_v$ for $v\in T\cup \infty_k$ by weak approximation for $U$. 

Let $S\supset T$ be a finite set of primes of $k$ such that $x'\in \U(\O_v)$ for all $v\not \in S$. By weak approximation for $U$, there exists $x''\in V(k)$ lying in
\[
\prod_{v\in S\setminus T} \U(\O_v) \times \prod_{v\in S^\dagger}W_{v,x'}'
\]
which is nonempty by our assumption on $\prod_{v\notin T} \U(\O_v)$. Then there exists a morphism $f:\P^1\to U$ over $k$ such that $f(a)=x'$ and $f(b)=x''$, where $a,b\in \P^1(k)$ by our assumption. Since $f$ can be extended over $\O_{T'}$ for a finite set of primes $T'\supset S$ of $k$, the following set 
\[
(\prod_{v\in T\cup \infty_k} f^{-1}(W_v)) \times (\prod_{v\in S\setminus T} f^{-1}(\U(\O_v))) \times (\prod_{v\in T'\setminus S} f^{-1}(\U(\O_v))) \times \prod_{v\not \in T'} \P^{1}(\O_v)
\]
is a non-empty open subset of $\P^1({\bf A}_k)$. 
Then there exists $c\in \P^1(k)$ lying in the above open set by weak approximation for $\P^1$. Therefore, $f(c)\in U(k)$ is inside $\prod_{v\in T\cup \infty_k} W_v\times\prod_{v\notin T} \U(\O_v)$ as required.
\end{proof}

Now, we can prove our main theorem~\ref{cubic AP}.
\begin{proof}[Proof of Theorem~\ref{cubic AP}]

\textit{Step 1:}
Let $Z$ be a closed subvariety of $X$ of codimension $2$, and let $U = X \setminus Z$. By part (i) of Theorem~\ref{cubic facts}, the cubic hypersurface $X$ satisfies weak approximation provided that $n \ge 18$. Since weak approximation is invariant under birational equivalence over $k$, the open subvariety $U \subset X$ also satisfies weak approximation. We now prove that $U$ satisfies strong approximation using Lemma~\ref{connecting lemma cubic}.
\medskip

\textit{Step 2:} We fix $S^\dagger = \Omega_{k,\mathrm{real}}$ if $\Omega_{k,\mathrm{real}} \neq \emptyset$, and $S^\dagger = \{v_0\}$ for a chosen $v_0 \in \Omega_k$ if $\Omega_{k,\mathrm{real}} = \emptyset$. Let $V_1 \subset X$ be the open dense subvariety, and let $\prod_{v\in S^\dagger} W_v' \subset \prod_{v\in S^\dagger} X(k_v)$ be the open non-empty subset defined in part~(ii) or part~(ii') of Theorem~\ref{cubic facts} (if $\Omega_{k,\mathrm{real}} = \emptyset$, $W'_{v_0}$ can be any open non-empty set). Then for $x \in V_1(k) \cap \prod_{v\in S^\dagger} W_v'$, the Fano variety $F_x$ has a Zariski-dense set of rational points over $k$. Furthermore, $F_x$ is smooth geometrically integral of dimension $n-4$ for $x\in V_1(k)$. We shrink $\prod_{v\in S^\dagger} W_v'$ by replacing it with the intersection $\prod_{v\in S^\dagger} (V_1(k_v) \cap W_v')$, which is open and non-empty since $V_1$ is open dense in $X$, and still denote it by $\prod_{v\in S^\dagger} W_v'$. Then $X(k) \cap \prod_{v\in S^\dagger} W_v' \subset X(k) \cap \prod_{v\in S^\dagger} V_1(k_v) \subset V_1(k)$.

Now we fix $V_2$ and $V_3$ to be the open dense subvarieties of $X\times X$ defined in Proposition~\ref{Conics avoid Z} and Proposition~\ref{dim-Deltax,x'}. Let $W=V_2\cap V_3\subset X\times X$ be the open dense subvariety, let $p_1(W)$ be the image of the projection $p_1:X\times X\to X$ which is an open dense subvariety of $X$ since $p_1$ is flat of finite presentation. We fix $V=U\cap p(W)$, which will be the open dense subvariety of $U$ in Lemma~\ref{connecting lemma cubic}. 

\medskip
\textit{Step 3:} For any $x'\in V(k)$, consider the rational map $\varphi_{x',S^\dagger}:\prod_{v\in S^\dagger} X_{k_v} \dashrightarrow \prod_{v\in S^\dagger }X_{k_v}$, which restricts to an involution $\varphi_{x',S^\dagger}:\prod_{v\in S^\dagger} V^*_{x',v} \to \prod_{v\in S^\dagger }V^*_{x',v}$ with $V^*_{x',v}$ open dense in $X(k_v)$ for $v\in S^\dagger$ by Definition~\ref{involution varphi_x'}. Since $W\subset X\times X$ is open and $x'\in V\subset p_1(W)$, the subvariety $W_{x'}=p_2(p_1^{-1}(x')\cap W )\subset p_2(\{x'\}\times X)= X$ is open dense and $\{x
'\}\times W_{x'}\subset W$.

We set $\prod_{v\in S^\dagger} W_{v,x'}'$ as the intersection of $\prod_{v\in S^\dagger} U(k_v)\cap W_{x'}(k_v)$ and the image of $\prod_{v\in S^\dagger} W_v'\cap V^*_{x',v}$ under $\varphi_{x',S^\dagger}$. We claim that $W'_{v,x'}$ are open and nonempty for $v\in S^\da$. This comes from the following facts: the restriction of $\varphi_{x',S^\dagger}$ is an isomorphism, $V^*_{x',v}$ and $U(k_v)\cap W_{x'}(k_v)$ are both open dense in $X(k_v)$ and $W_v'$ is open in $X(k_v)$ for $v\in S^\dagger$.

Then, for $x''\in V(k)\cap \prod_{v\in S^\dagger} W_{v,x'}'$, we have $(x',x'')\in \{x'\}\times W_{x'}\subset W$. We set $x=\varphi_{x'}(x'')$ which is well defined since $x''\in W'_{v,x'}\subset V^*_{x',v}$. This means $k$-rational points $x,x',x''\in X(k)$ are distinct points lie on a common line $L'\not\subset X$. Then $x\in X(k) \cap \prod_{v\in S^\dagger} W_v' \subset V_1(k)\cap \prod_{v\in S^\dagger} W_v'$ by our selection of $\prod_{v\in S^\dagger} W_v'$ and $\prod_{v\in S^\dagger} W_{v,x'}'$. So $F_x$ is smooth geometrically integral of dimension $n-4$, having Zariski dense rational points over $k$ by our selection of $V_1$ and $\prod_{v\in S^\dagger} W_v'$.

\medskip
\textit{Step 4:} Since $(x',x'')\in W$ lie on the open dense subvarieties in the conditions of Proposition~\ref{dim-Deltax,x'} and Proposition~\ref{Conics avoid Z} by our selection of $V$, we have $\dim \Delta_{x',x''}\le n-5$ and $\dim M^\da_{x',x''}(Z)\le n-5$. Recall that $F_x$ is geometrically integral and smooth of dimension $n-4$. So we can apply Proposition~\ref{birational between F_x and M_x',x''}, and there exists an open subvariety $F_{x}^\dagger\subset F_x$ with $F_{x}^\dagger \cong M^\dagger_{x',x''}$ over $k$, which is nonempty. Then $M^\dagger_{x',x''}$ is of dimension $n-4$, and the rational points $M^\dagger_{x',x''}(k)$ are Zariski dense in $M^\dagger_{x',x''}$. Then $M^\dagger_{x',x''}\setminus M^\da_{x',x''}(Z)$ is a nonempty open subscheme of $M^\dagger_{x',x''}$. So there is a $k$-rational point $[C]$ in $M^\dagger_{x',x''}\setminus M^\da_{x',x''}(Z)$, which is a non-degenerate conic defined over $k$ avoiding $Z$ and connecting $x'$ and $x''$. Since $C(k)$ is nonempty (as $x'\in C(k)$), and $C$ is a geometrically irreducible conic, we have $C\cong \P_k^1$ over $k$. So there is a morphism $\P^1_k \cong C \hookrightarrow U$, which sends two $k$-points on $\P^1_k$ to points $x',x''\in C(k)$, and we complete the proof.
\end{proof}

\printbibliography[title=References]

\end{document}